\documentclass[a4paper,12pt]{article}
\usepackage[english]{babel}
\usepackage[utf8]{inputenc}
\usepackage{amsfonts,amssymb}
\usepackage{amsmath}
\usepackage{mathtools}
\usepackage{mathrsfs}
\usepackage{amsthm}
\usepackage{float}
\usepackage{cite}
\usepackage{csquotes}
\usepackage{physics}
\usepackage{subfig}
\usepackage{url}
\usepackage[final]{graphicx}
\usepackage[hidelinks]{hyperref}
\usepackage{titlesec}
\usepackage[titletoc,toc,title]{appendix}
\setlength{\parskip}{0.5em}

\setlength{\topmargin}{-1cm}
\setlength{\topskip}{1cm}    
\setlength{\textheight}{24cm} 
\setlength{\textwidth}{15cm}  
\setlength{\oddsidemargin}{0.6cm} 
\setlength{\evensidemargin}{0.6cm} 
\theoremstyle{plain}
\newtheorem{theorem}{Theorem}
\newtheorem{prop}{Proposition}[section]

\newtheorem{lemma}[prop]{Lemma}

\theoremstyle{definition}
\newtheorem{remark}[prop]{Remark}

\newtheorem{example}[prop]{Example}
\DeclareMathOperator{\sign}{sign}
\DeclareMathOperator{\win}{wind}
\DeclareMathOperator{\col}{color}

\newcommand{\D}{\mathcal{D}}
\newcommand{\tl}{\mathbf{t}}
\newcommand{\bl}{\mathbf{b}}
\newcommand{\R}{\mathcal{R}}
\newcommand{\pl}{\mathbf{p}}
\newcommand{\prob}{\mathbb{P}}
\newcommand{\overbar}[1]{\mkern 1.5mu\overline{\mkern-1.5mu#1\mkern-1.5mu}\mkern 1.5mu}

\begin{document}
\title{3D domino tilings: irregular disks \\
and connected components under flips}
\author{Raphael de Marreiros}
\maketitle
\begin{abstract}

We consider three-dimensional domino tilings of cylinders $\R_N = \D \times [0,N]$ where $\D \subset \mathbb{R}^2$ is a fixed quadriculated disk and $N \in \mathbb{N}$.
A domino is a $2 \times 1 \times 1$ brick.
A flip is a local move in the space of tilings $\mathcal{T}(\R_N)$: remove two adjacent dominoes and place them back after a rotation.
The twist is a flip invariant which associates an integer number to each tiling.
For some disks $\D$, called \emph{regular}, two tilings of $\R_N$ with the same twist
can be joined by a sequence of flips once we add vertical space to the cylinder.
We have that if $\D$ is regular then the size of the largest connected component under flips of $\mathcal{T}(\R_N)$ is $\Theta(N^{-\frac{1}{2}}|\mathcal{T}(\R_N)|)$.
The domino group $G_\D$ captures information of the space of tilings.
A disk $\D$ is regular if and only if $G_{\D}$ is isomorphic to $\mathbb{Z} \oplus \mathbb{Z}/(2)$; sufficiently large rectangles are regular.

We prove that certain families of disks are irregular.
We show that the existence of a bottleneck in a disk $\D$ often implies irregularity.
In many, but not all, of these cases, we also prove that $\D$ is \emph{strongly irregular}, i.e., that there exists a surjective homomorphism from $G_{\D}^+$ (a subgroup of index two of $G_{\D}$) to the free group of rank two.
Moreover, we show that if $\D$ is strongly irregular then the cardinality of the largest connected component under flips of $\mathcal{T}(\R_N)$ is $O(c^N |\mathcal{T}(\R_N)|)$ for some $c \in (0,1)$.
\end{abstract}
\let\thefootnote\relax\footnotetext{2020 {\em Mathematics Subject Classification}.
Primary 05B45; Secondary 52C22, 05C70.

{\em Keywords and phrases}. Three-dimensional tilings, dominoes, flip.}
\section{Introduction}

Domino tilings have been the subject of many studies, specially due to their connections with various topics such as dimer models.
In particular, much is known regarding two-dimensional domino tilings.
In this context, we consider connected regions in $\mathbb{R}^2$ formed by a finite union of closed unit squares with vertices in $\mathbb{Z}^2$.
A~\emph{domino} is then a union of two unit squares sharing an edge (i.e., a rectangle with sides of length two and one) and a \emph{domino tiling} of a given region is a covering composed of dominoes with disjoint interiors.
Numerous results have been established in two dimensions, including methods to decide whether a region admits a tiling~\cite{CL90,THR90} and to count the number of domino tilings~\cite{Kas61}.

Particularly relevant to this paper is the problem of connectivity by flips.
A~\emph{flip} is a local move that consists of a $90^{\circ}$ rotation of two adjacent parallel dominoes.
Thurston~\cite{THR90} proved that any two tilings of a \emph{quadriculated disk} (i.e., a planar~region homeomorphic to a closed disk) can be joined by a sequence of flips.
For non simply connected regions, the idea of flux of a tiling is presented in~\cite{STRD95}.
The flux is a flip invariant such that two tilings of a planar region can be joined by a sequence of flips if and only if they have the same flux.

The notion of domino tilings and the associated questions can be easily extended to higher dimensions.
However, many of the methods that work in two dimensions do not apply in higher dimensions, and much less is known in these cases.
We focus on dimension three, so that a region is as a connected union of closed unit cubes and a \emph{(3D) domino} is a parallelepiped formed by two unit cubes sharing a face.
In contrast to the two-dimensional case, the space of tilings of many simply connected regions is no longer connected by flips; see Figure~\ref{fig:isolatedtilings} for examples of tilings of $[0,4]^2 \times [0,2]$ which admit no flip.
The counting tilings problem becomes computationally more complex in dimension three~\cite{PY13}.

We investigate domino tilings of cylinders, a three-dimensional region obtained from a quadriculated disk.
Specifically, a \emph{cylinder} $\R_N \subset \mathbb{R}^3$ is a cubiculated region formed by the cartesian product of a quadriculated disk $\D \subset \mathbb{R}^2$ and an interval $[0,N]$ for some $N \in \mathbb{N}$.
We adopt the approach used in~\cite{MS15} and draw a tiling of $\R_N$ by describing its behavior at each floor $\D \times [K-1,K]$; for instance, see Figure~\ref{fig:comodesenhartiling}.
The set of tilings of $\R_N$ is denoted by $\mathcal{T}(\R_N)$.
In general, we consider cylinders where the underlying disk is balanced and nontrivial.
A disk $\D$ is \emph{balanced} if it contains an equal number of black and white unit squares; a unit square $[a,a+1]\times[b,b+1] \subset \D$ with $(a,b) \in \mathbb{Z}^2$ is \emph{white} if $a+b$ is even and \emph{black} if $a+b$ is odd.
Additionally, $\D$ is \emph{trivial} if its unit squares are adjacent (i.e., share an edge) to at most other two unit squares.

\begin{figure}[H] 
\centerline{
\includegraphics[width=0.98\textwidth]{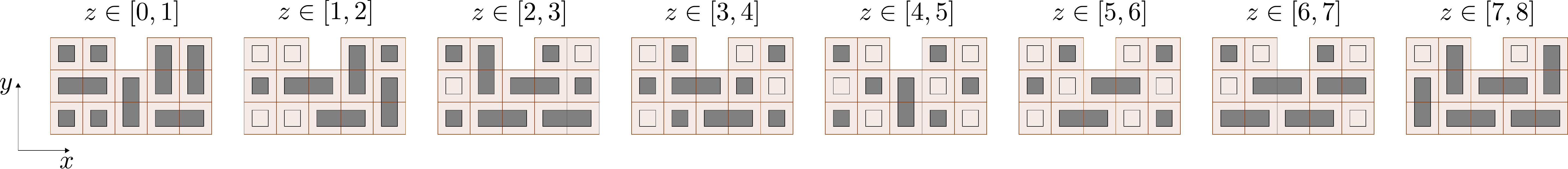}}
\caption{A tiling of a cylinder $\D \times [0,8]$.
The $x$-axis and the $y$-axis are fixed and floors are exhibited in increasing order from the left to the right.
Dominoes parallel to either the $x$-axis or the $y$-axis are represented as planar dominoes.
Vertical dominoes, i.e., dominoes parallel to $z$-axis, are represented by two unit squares contained in adjacent floors; to avoid confusion the unit square contained in the highest floor, which appears at the right hand side, is left unfilled.
}
\label{fig:comodesenhartiling}
\end{figure}

Milet and Saldanha \cite{MS18} introduced the \emph{twist} of a tiling for several contractible cubiculated regions contained in $\mathbb{R}^3$.
In this case, the twist is a flip invariant assuming values in $\mathbb{Z}$: given a tiling $\tl$ of a suitable region, we have an integer $\textsc{Tw}(\tl)$.
The definition of the twist involves a relatively complicated combinatorial formula.
Intuitively, the twist measures how twisted a tiling is by counting certain pairs of dominoes in different directions.

For a fixed nontrivial balanced quadriculated disk $\D \subset \mathbb{R}^2$ the distribution of the twist (according to the number of tilings of $\D \times [0,N]$) tends to a Gaussian as $N$ goes to infinity (see \cite{Sal21}).
In the same scenario, almost always any two tilings with the same twist can be joined by a sequence of flips.
Experimental evidence suggests that similar results hold for large cubical boxes.

In order to study the problem of connectivity by flips of the space of tilings of cylinders, we consider two distinct, but related, equivalence relations.
Consider a disk $\D$ and two tilings $\tl_1 \in \mathcal{T}(\R_{N_1})$ and $\tl_2 \in \mathcal{T}(\R_{N_2})$.
We write $\tl_1 \approx \tl_2$ if $N_1= N_2$ and there exists a sequence of flips joining $\tl_1$ and $\tl_2$.
We need two concepts to define the second equivalence relation.
First, let the concatenation $\tl_1 * \tl_2$ be the tiling of $\R_{N_1+N_2}$ formed by the union of $\tl_1$ and the translation of $\tl_2$ by $(0,0,N_1)$.
Second, for $N \in 2\mathbb{N}$ let the \emph{vertical tiling} $\tl_{\text{vert},N} \in \mathcal{T}(\R_N)$ be the tiling consisting solely of vertical dominoes.
We say that $\tl_1 \sim \tl_2$ if $N_1 \equiv N_2 \pmod 2$ and there exist $M_1,M_2 \in 2\mathbb{N}$ such that $N_1+M_1=N_2+M_2$ and $\tl_1*\tl_{\text{vert},M_1} \approx \tl_2*\tl_{\text{vert},M_2}$.
Figure~\ref{fig:isolatedtilings} illustrates two tilings $\tl_1$ and $\tl_2$ of $[0,4]^2 \times [0,2]$ which admit no flip, implying that $\tl_1 \not\approx \tl_2$.
However, as can be verified through a construction, $\tl_1*\tl_{\text{vert},2} \approx \tl_2*\tl_{\text{vert},2}$, so that $\tl_1 \sim \tl_2$.
\begin{figure}[H] 
\centerline{
\includegraphics[width=0.6\textwidth]{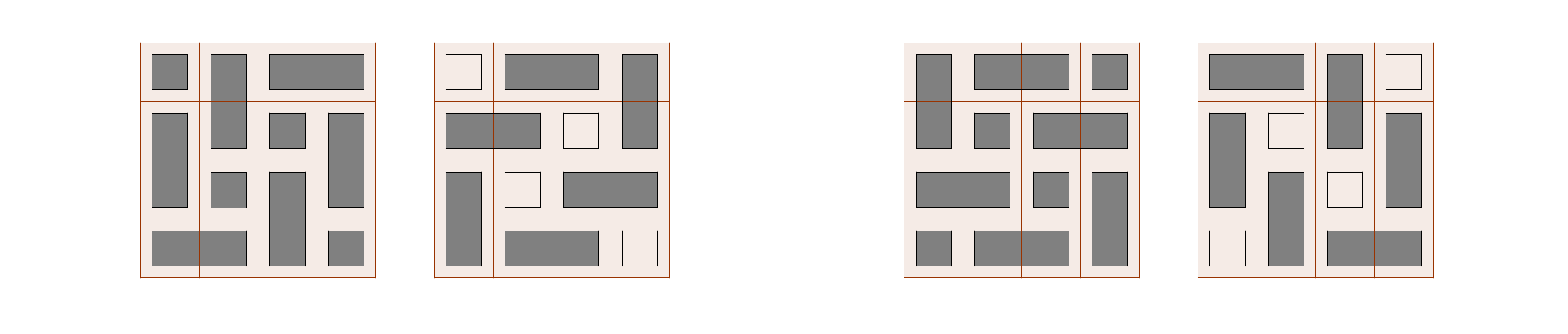}}
\caption{Two tilings $\tl_1$ and $\tl_2$ of $[0,4]^2 \times [0,2]$.}
\label{fig:isolatedtilings}
\end{figure}

The concept of regular disk is presented in~\cite{Sal22}.
A nontrivial balanced quadriculated disk $\D$ is \emph{regular} if whenever two tilings $\tl_1$ and $\tl_2$ of $\D \times [0,N]$ have the same twist then $\tl_1$ and $\tl_2$ can be connected by a sequence of flips provided that some vertical space is allowed; more precisely, if $\textsc{Tw}(\tl_1)=\textsc{Tw}(\tl_2)$ then $\tl_1 \sim \tl_2$.
We say that a disk is \emph{irregular} if it is not regular.
Saldanha \cite{Sal22} proved that the rectangle $\D = [0,L] \times [0,M]$ with $LM$ even is regular if and only if $\min \{ L,M \} \geq 3$; and it was conjectured that ``plump'' disks are regular.
For instance, Figure~\ref{fig:regcounterexample} below exhibits examples of regular disks.

\begin{figure}[H]
\centerline{
\includegraphics[width=0.66\textwidth]{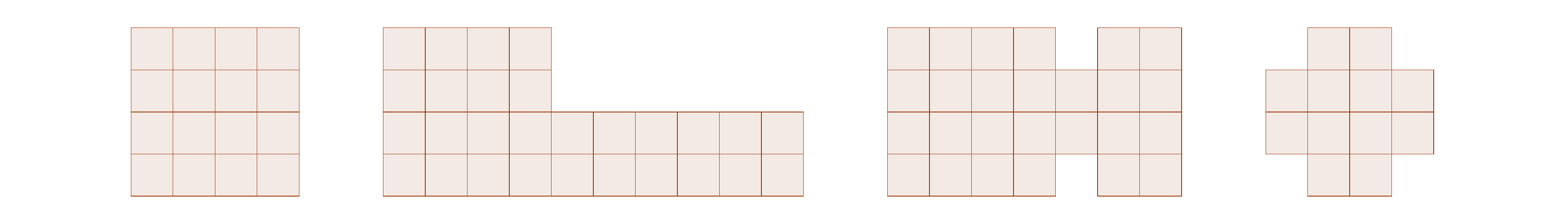}}
\caption{Examples of disks whose regularity follows from \cite{Sal22}, \cite{Mar21} and brute force.}
\label{fig:regcounterexample}
\end{figure}

In this paper, we prove that other families of disks are irregular.
We show that, for some disks $\D$, the existence of either a unit square or a domino that disconnects $\D$ implies that $\D$ is irregular.
As a consequence, the disks in Figure~\ref{fig:nonreg} are irregular.

\begin{figure}[H] 
\centerline{
\includegraphics[width=0.65\textwidth]{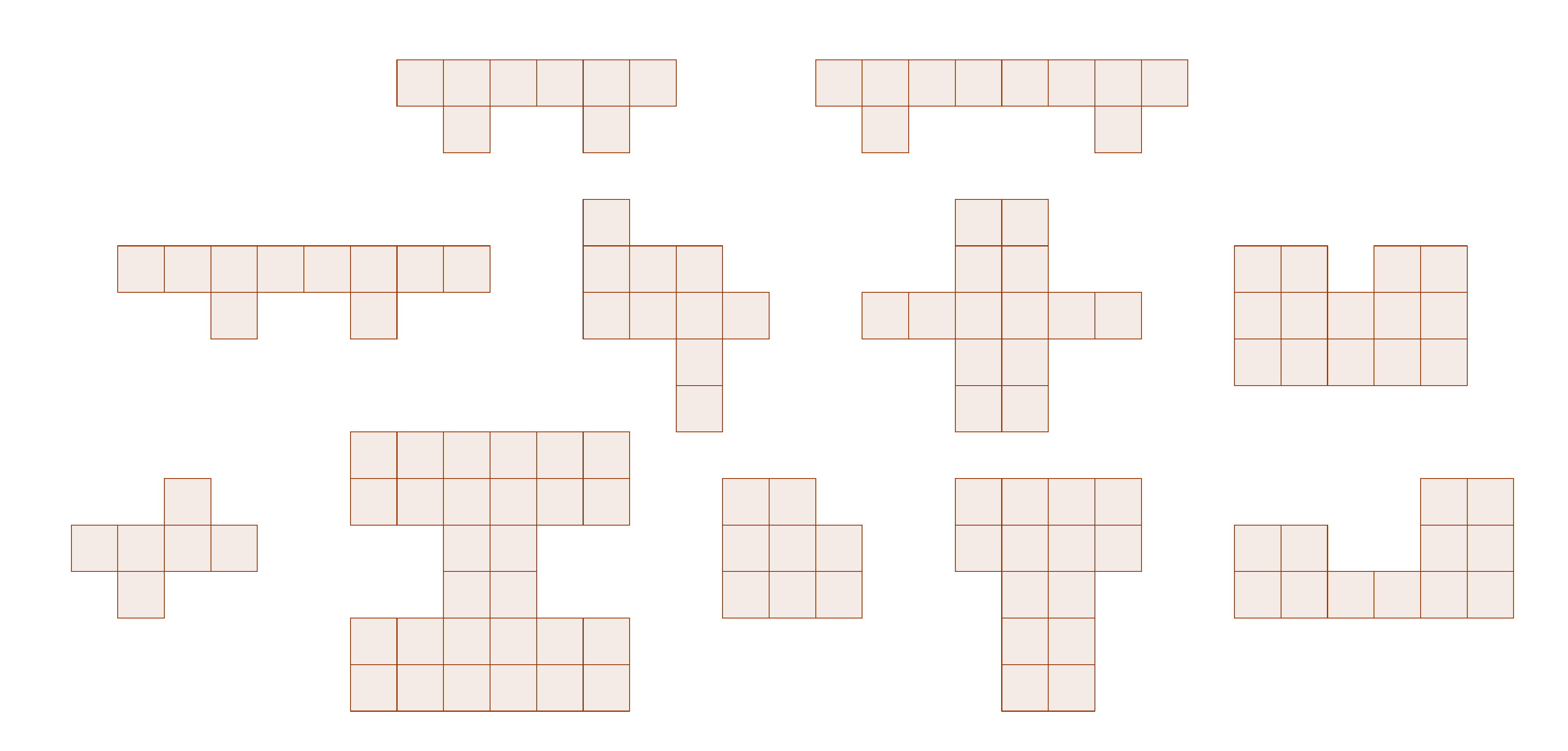}}
\caption{Examples of irregular disks.}
\label{fig:nonreg}
\end{figure}

We study the irregularity of a disk $\D$ by considering its domino group $G_\D$.
The domino group is the fundamental group of a finite 2-dimensional CW-complex $\mathcal{C}_\D$, whose construction is described in \cite{Sal22} and in Section~\ref{sec:definitions}.
For the moment, we briefly overview its construction.
The key idea is to associate each tiling of $\R_N$ with an oriented closed path of length $N$ in the 1-skeleton of $\mathcal{C}_\D$.
A tiling of $\R_N$ is drawn as a sequence of $N$ floor configurations.
We convert this sequence into an oriented path in $\mathcal{C}_\D$ by assigning an oriented edge to each floor configuration, with the initial (resp.\ final) vertex of an edge corresponding to the unit squares occupied by vertical dominoes that intersect the preceding (resp.\ succeeding) floor.
The 2-cells are then attached to attend flips.
Thus, two tilings are equivalent under $\sim$ if and only if their associated closed paths are homotopic.

The even domino group $G_{\D}^+$ is the normal subgroup of index two of $G_\D$ whose elements correspond to tilings of cylinders of even height.
In general, $G_\D^+$ carries most of the relevant information about the domino group.
If there exists a domino tiling of $\D \times [0,1]$ then $G_{\D}$ is isomorphic to a semidirect product of $G_{\D}^+$ and $\mathbb{Z}/(2)$.  
The twist defines a homomorphism from $G_{\D}^+$ to the integers $\mathbb{Z}$; it turns out that $\D$ is regular if and only if this homomorphism is an isomorphism.
In this case, $G_{\D} =\mathbb{Z} \oplus \mathbb{Z}/(2)$.

We distinguish irregular disks by the behavior of their even domino groups.
A disk $\D$ is called \emph{strongly irregular} if there exists a surjective homomorphism from the even domino group $G_{\D}^+$ to $F_2 =\langle a,b\rangle$, the free group generated by $a$ and $b$ with $e$ as the identity.
The structure of $G_\D^+$ provides information about the space of tilings $\mathcal{T}(\R_N)$ for large values of $N$.
The $\approx$-equivalence classes in $\mathcal{T}(\R_N)$ are called connected components under flips.
In the regular case, i.e. $G_{\D}^+ = \mathbb{Z}$, it follows from \cite{Sal21} that the size of the largest component is $\Theta(N^{-\frac{1}{2}}|\mathcal{T}(\R_N)|)$.
We prove that if $\D$ is strongly irregular then, for large values of $N$, the connected components under flips consist of exponentially small fractions of $\mathcal{T}(\R_N)$.

\begin{theorem}\label{thm:sizecomp}
Consider a nontrivial balanced quadriculated disk $\D$.
Let $\mathbf{T}_1$ and $\mathbf{T}_2$ be two independent random tilings of $\D \times [0,N]$.
If $\D$ is strongly irregular then there exists $c \in (0,1)$ such that $\mathbb{P} (\mathbf{T}_1 \approx \mathbf{T}_2) = o(c^N)$.
\end{theorem}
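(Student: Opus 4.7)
The plan is to reduce the event $\mathbf{T}_1 \approx \mathbf{T}_2$ to the coincidence of the images of two independent random walks in the free group $F_2$, and then to exploit the non-amenability of $F_2$ to obtain exponential decay. For the first (group-theoretic) reduction, recall from the excerpt that every tiling $\tl$ of $\R_N$ corresponds to a closed path of length $N$ in the $1$-skeleton of $\mathcal{C}_\D$, and that flip-equivalent tilings give homotopic paths; hence the class $[\tl] \in G_\D = \pi_1(\mathcal{C}_\D)$ is a flip invariant, so $\prob(\mathbf{T}_1 \approx \mathbf{T}_2) \leq \prob([\mathbf{T}_1] = [\mathbf{T}_2])$. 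Both classes lie in a common coset of $G_\D^+$ determined by the parity data of a height-$N$ tiling, so $[\mathbf{T}_2]^{-1}[\mathbf{T}_1] \in G_\D^+$; post-composing with the strongly-irregular surjection $\psi : G_\D^+ \twoheadrightarrow F_2$ gives
\[
\prob(\mathbf{T}_1 \approx \mathbf{T}_2) \;\leq\; \prob\!\bigl(\psi([\mathbf{T}_1]) = \psi([\mathbf{T}_2])\bigr) \;=\; \sum_{w \in F_2} \mu_N(w)^2,
\]
where $\mu_N$ denotes the law of $\psi([\mathbf{T}])$ under the uniform measure on $\mathcal{T}(\R_N)$.

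Next I would describe $\mu_N$ probabilistically and apply non-amenability. The bijection between tilings of $\R_N$ and length-$N$ closed walks in the $1$-skeleton of $\mathcal{C}_\D$ based at a fixed vertex $v_0$ turns the uniform measure on $\mathcal{T}(\R_N)$ into the uniform measure on such walks. After fixing a spanning tree of $\mathcal{C}_\D$ and labelling each remaining oriented edge by the corresponding generator of $G_\D$, $[\mathbf{T}]$ is the product of edge labels along the walk, so $\psi([\mathbf{T}])$ is an $N$-fold product of elements of $F_2$ driven by a finite-state Markov chain on $V(\mathcal{C}_\D)$. Accordingly $\sum_w \mu_N(w)^2$ equals the coincidence probability at time $N$ of two independent copies of this group-valued walk. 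Consider the associated transfer operator $P$ on $\ell^2(F_2 \times V(\mathcal{C}_\D))$; because $\psi$ is surjective, the increments accumulated along closed walks at $v_0$ generate all of $F_2$, and together with irreducibility on $V(\mathcal{C}_\D)$ this places us in the regime where Kesten's amenability criterion applies and forces the spectral radius $\rho := \|P\|_{\ell^2}$ to satisfy $\rho < 1$. A standard Cauchy--Schwarz estimate then yields $\sum_w \mu_N(w)^2 \leq |V(\mathcal{C}_\D)| \, \rho^{2N}$, and the theorem follows with any $c \in (\rho^2, 1)$.

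The main obstacle is the spectral step: justifying rigorously that the Markov-chain step distribution, regarded as a walk on $F_2 \times V(\mathcal{C}_\D)$, is non-degenerate enough to invoke Kesten's inequality. Surjectivity of $\psi$ is the crucial input, guaranteeing that the effective $F_2$-valued increments are not confined to any amenable subgroup. Handling the index-two coset decomposition cleanly and translating from the $\ell^2$ operator-norm bound to a pointwise bound on coincidence probabilities is routine by comparison.
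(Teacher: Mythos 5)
Your opening reduction is fine: flip-equivalent tilings give homotopic closed paths, so $\prob(\mathbf{T}_1\approx\mathbf{T}_2)\le\prob(\psi([\mathbf{T}_1])=\psi([\mathbf{T}_2]))=\sum_{w}\mu_N(w)^2$. The genuine gap is in the spectral step. The uniform measure on $\mathcal{T}(\R_N)$ is the uniform measure on closed walks of length $N$ at $\pl_{\circ}$ in the $1$-skeleton, i.e.\ a \emph{bridge} (endpoint-conditioned) measure on a non-regular multigraph; it is not the path law of a time-homogeneous Markov chain that you can feed into Kesten's criterion, and your asserted bound $\sum_w\mu_N(w)^2\le |V(\mathcal{C}_\D)|\,\rho^{2N}$ with $\rho=\|P\|<1$ does not follow as stated. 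Written correctly, the collision probability is a ratio of walk counts: the numerator is the number of length-$2N$ closed walks at $\pl_{\circ}$ with midpoint $\pl_{\circ}$ and trivial $\psi$-image, which is controlled by $\|A_X\|^{2N}$ where $A_X$ is the adjacency operator of the cover $X$ associated to $\ker\psi$; the denominator is $|\mathcal{T}(\R_N)|^2$. Non-amenability of the deck group gives the strict inequality $\|A_X\|<\lambda$, where $\lambda$ is the Perron eigenvalue of the component of $\pl_{\circ}$, but this yields exponential decay only if you also prove the matching lower bound $|\mathcal{T}(\R_N)|\ge c\,\lambda^{N}$ (a Perron--Frobenius argument on that component, with attention to parity/bipartiteness and to the multiplicities of edges and loops). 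This normalization comparison is precisely what you dismiss as ``routine''; with $P$ a normalized transition operator, $\rho<1$ bounds return-type probabilities of a different measure, not the uniform-tiling bridge. A second, smaller issue: the Kesten/Grigorchuk-type amenability criterion you invoke is for regular (normal) covers, and $\ker\psi$ is normal in $G_\D^+$ but not necessarily in $G_\D$, so you must work over the double cover $\mathcal{C}_\D^+$ (restricting to the relevant parity of $N$) or first extend $\psi$ to $G_\D$; as written the operator on $\ell^2(F_2\times V(\mathcal{C}_\D))$ is not the right object.

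For comparison, the paper sidesteps all spectral theory: using Lemma 13 of \cite{SalProb} it shows that a random tiling contains $\Theta(N)$ blocks at prescribed heights which, conditionally on everything else, are uniform over a fixed nine-element set $\mathcal{B}$ of tilings of $\R_{N_0}$ whose $\phi$-images are $a^{\pm1},b^{\pm1}$ (once each) and $e$ (five times); this turns $\phi(\mathbf{T}_2*\tl^{-1})$ into a product $y_0X_1y_1\cdots X_ry_r$ with i.i.d.\ $X_i$ and fixed interpolating words, and the explicit combinatorial estimate $\prob(y_0X_1y_1\cdots X_ty_t=e)\le(1-s(4-\sqrt{13}))^t$ (Lemma~\ref{lem:ap}, proved in the appendix) finishes the proof. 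If you want to pursue your route, the two missing ingredients you would have to supply are the cover-versus-base spectral comparison over $\mathcal{C}_\D^+$ and the Perron--Frobenius lower bound on $|\mathcal{T}(\R_N)|$.
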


\begin{remark}
It follows from~\cite{Sal21} that the probability $\mathbb{P}(\textsc{Tw}(\mathbf{T}_1) = \textsc{Tw}(\mathbf{T}_2))$ is asymptotically bounded below by $N^{-1}$ for any nontrivial balanced disk $\D$.
Thus, Theorem~\ref{thm:sizecomp} implies that $\mathbb{P}(\mathbf{T}_1 \approx \mathbf{T}_2 \mid \textsc{Tw}(\mathbf{T}_1)=\textsc{Tw}(\mathbf{T}_2)) = o(c^N)$ if $\D$ is a strongly irregular disk. 
\end{remark}

Normally, computing the even domino group of a disk is harder than proving its strongly irregularity.
In~\cite{Sal22}, it is proved that thin rectangles $\D_L= [0,L] \times [0,2]$ with $L \geq 3$ are strongly irregular.
Inspired by the ideas and results presented in~\cite{MS15}, we compute the even domino group $G_{\D_L}^+$.
Let $S_L= \{ a_i \colon i \in \mathbb{Z}_{\neq 0}\text{  and } |i| \leq \genfrac{\lfloor}{\rfloor}{0.5pt}{1}{L-1}{2} \}$ be a set of symbols and consider $R_L = \{ (m,n) \in \mathbb{Z}^2 \colon \max\{|m|,|n|,|m-n|\} < \genfrac{\lfloor}{\rfloor}{0.5pt}{1}{L}{2} \}$.
Now, consider the group described in terms of generators and relations
\begin{align}\tag{1.1}
G_L^+ = \langle S_L \mid [a_m, a_n]=1 \text{ for } (m,n) \in R_L \rangle.
\label{eq:1.1}
\end{align}
In other words, $G_L^+$ is the quotient of the free group on $S_L$ by the normal subgroup generated by $[a_m, a_n]$ with $(m,n) \in R_L$.

\begin{theorem}\label{thm:thinrectangles}
Let $L\geq 3$ and consider the disk $\D_L = [0,L] \times [0,2]$.
Then, the even domino group $G_{\D_L}^+$ is isomorphic to $G_L^+$.
\end{theorem}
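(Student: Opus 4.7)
The plan is to compute $G_{\D_L}^+$ directly from the CW-complex $\mathcal{C}_{\D_L}$ constructed in \cite{Sal} and recalled in Section~\ref{sec:definitions}, then match the resulting presentation with the abstractly defined $G_L^+$ via Tietze moves. The thinness of $\D_L=[0,L]\times[0,2]$ is what makes this feasible: in a rectangle of height $2$, the possible floor configurations, $1$-cells (dominoes in $\D\times[0,1]$) and flip $2$-cells are few enough to enumerate explicitly.

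First I would work with the connected double cover $\mathcal{C}_{\D_L}^+ \to \mathcal{C}_{\D_L}$ corresponding to the index-two subgroup $G_{\D_L}^+ \leq G_{\D_L}$, so that $\pi_1(\mathcal{C}_{\D_L}^+) = G_{\D_L}^+$. Choosing a spanning tree in its $1$-skeleton produces the standard presentation in which generators come from edges outside the tree and relators from boundaries of $2$-cells. The task is then to relabel the surviving generators so that those corresponding to loops localized near horizontal coordinate $i\neq 0$ become the elements $a_i\in S_L$, with the sign of $i$ recording the parity data forced by the index-two cover, and to eliminate any remaining surviving edges (such as the candidate ``$a_0$'') via Tietze moves.

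Next I would enumerate the $2$-cells, i.e., the flip relations. The key geometric observation is that in a height-$2$ strip, two flips occur on disjoint supports precisely when their horizontal positions $m,n$ satisfy $\max\{|m|,|n|,|m-n|\}<\lfloor L/2\rfloor$, which is exactly the condition defining $R_L$. I expect each such commuting pair to contribute, after spanning-tree collapse, a single relator $[a_m,a_n]$, while non-commuting pairs contribute no relation between $a_m$ and $a_n$. This is where the main obstacle lies: one must choose the spanning tree and the labeling so that flip-boundary words reduce to single commutators and no spurious relation is missed. I would argue by induction on $L$, comparing $\mathcal{C}_{\D_L}$ with $\mathcal{C}_{\D_{L+1}}$ upon adjoining a column, and checking that the newly introduced $2$-cells contribute precisely the new commutators allowed by $R_{L+1}$ and nothing more.

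The strong irregularity assertion is then immediate from the isomorphism $G_{\D_L}^+ \cong G_L^+$ together with the homomorphism $\phi\colon G_L^+\to H=\langle a_i,a_{-i}\rangle$ described just above the theorem, where $i=\lfloor (L-1)/2 \rfloor$. Indeed, $\phi$ is well defined because no defining relator of $G_L^+$ involves both $a_i$ and $a_{-i}$: the pair $(i,-i)$ lies outside $R_L$ since $|i-(-i)|=2i\geq \lfloor L/2\rfloor$ for $L\geq 3$, so every relator maps to the identity. The same observation shows that $H$ is itself free of rank two, since the only potential commutator relation between $a_i$ and $a_{-i}$ is absent from the presentation, yielding the required surjection onto $F_2$.
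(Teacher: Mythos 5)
Your plan — present $G_{\D_L}^+$ directly from the complex $\mathcal{C}_{\D_L}^+$ via a spanning tree and then reach $G_L^+$ by Tietze moves and induction on $L$ — is genuinely different from the paper's argument, but as written it has a gap precisely where the theorem's content lies. The indices in $S_L$ and the set $R_L$ are \emph{not} horizontal positions of flips or of localized edges: they are winding numbers of jewels in the projected diagram of a tiling of the duplex region $[0,N]\times[0,L]\times[0,2]$ (the invariant of Milet--Saldanha). Accordingly, the generator $a_i$ corresponds to a \emph{boxed tiling} of $\D_L\times[0,2(|i|+1)]$, a long loop in $\mathcal{C}_{\D_L}^+$, not to a single non-tree edge, and the relator $[a_m,a_n]$ is not the boundary of one $2$-cell but the statement that two such boxed tilings commute under concatenation up to a (long) sequence of flips. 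Your key claim that the relations arise exactly from flips with disjoint supports is false even in spirit: $R_L$ contains pairs with $\sign(m)=\sign(n)$ and $|m|+|n|\geq\lfloor L/2\rfloor$, whose boxed tilings overlap heavily; in the paper this case requires merging the enclosing cycles and an induction on $L$ (Lemma~\ref{lem:comjew}), and the restriction to $R_L$ in the other direction is forced by the geometric constraint on winding numbers of two jewels in one column (Lemma~\ref{lem:vertflip}), not by disjointness. Your sketch supplies neither the flip invariant that rules out extra relations (the map $\Phi$ built from ordered jewels, Lemma~\ref{lem:homthinrect}) nor the reduction of an arbitrary tiling to a concatenation of boxed tilings (Lemma~\ref{lem:sepjew}), and without such ingredients the assertion that the spanning-tree presentation collapses to exactly the commutators indexed by $R_L$ is unsupported. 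The feasibility claim is also doubtful: the number of plugs of $\D_L$, hence of vertices and edges of $\mathcal{C}_{\D_L}$, grows exponentially in $L$, and the proposed comparison of $\mathcal{C}_{\D_L}$ with $\mathcal{C}_{\D_{L+1}}$ by ``adjoining a column'' is not developed (these complexes do not sit inside one another in any evident way).

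The final paragraph is fine: granting $G_{\D_L}^+\cong G_L^+$, the map killing all $a_j$ with $|j|<i$, $i=\lfloor(L-1)/2\rfloor$, is well defined because $(i,-i)\notin R_L$, and composing with $a_i\mapsto a$, $a_{-i}\mapsto b$ gives a surjection onto $F_2$; this matches the observation preceding the theorem in the paper (and is cleaner if you map directly onto an abstract $F_2$ rather than arguing that the subgroup $H$ is free). But the heart of the theorem — the computation of $G_{\D_L}^+$ — is not established by your outline.
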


In most cases, we demonstrate the strong irregularity of a disk $\D$ by explicitly constructing a surjective homomorphism from $G_{\D}^+$ to $F_2$.
This construction is based on the existence of floor configurations composed of dominoes in a staggered configuration, as the fifth floor of the tiling exhibited in Figure~\ref{fig:comodesenhartiling}.
Although obtaining such disks is not too hard, their description can be somewhat intricate.
For instance, the disks in Figure~\ref{fig:irregdisk-squrdisc} are strongly irregular by Theorem~\ref{thm:irregdisk-squrdisc}.
Additional constructions of strongly irregular disks are discussed in Section~\ref{sec:proofthms123}.

\begin{theorem}\label{thm:irregdisk-squrdisc}
Consider a balanced quadriculated disk $\D$.
Suppose that $\D$ contains a unit square $s$ such that $\D \smallsetminus s$ has at least three connected components.
If the largest component of $\D \smallsetminus s$ has size at most $|\D|-4$ then $\D$ is strongly irregular.
\end{theorem}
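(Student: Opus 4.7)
The strategy is to construct a surjective homomorphism $\Psi \colon G_{\D}^+ \to F_2 = \langle x_1, x_2 \rangle$, as indicated in the paragraph preceding the theorem. Let $s$ be the cut unit square and let $C_0, C_1, \ldots, C_k$ (with $k \geq 2$) denote the components of $\D \smallsetminus s$, listed with $|C_0|$ maximal. Each $C_i$ is necessarily adjacent to $s$, and the hypothesis $|C_0| \leq |\D| - 4$ forces $\sum_{i \geq 1} |C_i| \geq 3$, leaving enough room to realize two independent generators. I would pair $x_i$ with the non-largest component $C_i$ for $i \in \{1,2\}$ and send any contribution from the remaining components ($C_0$, or $C_j$ with $j \geq 3$) to the identity.

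Using the CW-complex $\mathcal{C}_{\D}$ recalled in Section~\ref{sec:definitions}, $G_{\D}^+$ is realized as a subgroup of index two of $\pi_1(\mathcal{C}_{\D})$, and closed edge-paths of length $N$ in $\mathcal{C}_{\D}$ correspond to tilings of $\R_N$. The plan is to define $\Psi$ on the 1-skeleton by reading the column above $s$: for each edge, inspect the tile covering the cube at $s$ on the corresponding floor and record a letter $x_i^{\pm 1}$ (with sign determined by the color parity of that cube) when this tile extends horizontally into $C_i$ for $i \in \{1,2\}$, and the identity in every other case (vertical tile, or tile extending into $C_0$ or into some $C_j$ with $j \geq 3$). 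Concatenating along a loop produces the candidate word in $F_2$. Surjectivity is then relatively cheap: since each of $C_1, C_2$ contains a square adjacent to $s$, one assembles small tilings of $(\{s\} \cup C_i) \times [0,M]$ realizing $x_i$ as the $\Psi$-image of an explicit loop.

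The main difficulty is showing that $\Psi$ descends to $\pi_1(\mathcal{C}_{\D})$, i.e., that every 2-cell of $\mathcal{C}_{\D}$ (corresponding to a local flip) is sent to the trivial word. Flips supported away from $s$ do not alter the reading and are immediate. The delicate cases are flips that touch $s$: a pair of vertical dominoes in the $s$-column and an adjacent column rotating into horizontal ones, and horizontal flips where one of the rotating dominoes contains $s$. In each such case one compares the word produced before and after the flip; the sign rule tied to color parity should be calibrated so that these comparisons reduce to cancellations of the form $x_i x_i^{-1} = 1$. A key structural observation is that $C_1$ and $C_2$ are disjoint components of $\D \smallsetminus s$, so no single flip can simultaneously involve tiles in both, which is precisely what prevents $\Psi$ from factoring through an abelian quotient and keeps the image free. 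The main obstacle will be the combinatorial case analysis of flips adjacent to $s$, and the correct choice of generators for $G_{\D}^+$ to make that verification clean; the size hypothesis on the non-largest components enters here to guarantee both that the loops realizing $x_1, x_2$ actually exist and that no relation identifying them is forced by a locally constrained flip near $s$.
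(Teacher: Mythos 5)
Your flip-invariance analysis is essentially sound: with the letter determined by which component the tile at $s$ enters and the sign by the colour (parity) of the cube over $s$, horizontal flips at $s$ keep the tile inside the same component, and a vertical flip at $s$ creates two occurrences on consecutive floors whose signs cancel, so the map does descend to a homomorphism on $G_{\D}^+$. The genuine gap is surjectivity, and it is not a matter of missing details: your map cannot be surjective. For any tiling of $\D\times[0,N]$ with $N$ even, look at $C_1\times[0,N]$. Its only exit is through the column over $s$, so the dominoes meeting it are internal ones (colour-balanced) plus the crossing tiles at $s$, each covering exactly one cube of $C_1\times[0,N]$. Since $C_1\times[0,N]$ has equally many white and black cubes, the crossing tiles cover equally many white and black cubes of $C_1$, i.e.\ occur equally often at the two colour parities of $s$. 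With your sign rule this forces the exponent sum of $x_1$ (and likewise $x_2$) in the image word to be zero for \emph{every} element of $G_{\D}^+$; the image lies in the commutator subgroup of $F_2$, and in particular no loop maps to $x_1$ or $x_2$, so the "cheap" realization of the generators via tilings of $(\{s\}\cup C_i)\times[0,M]$ is impossible. Note also that your construction nowhere really uses the hypothesis $|\text{largest component}|\le|\D|-4$: it applies verbatim to the disks of Example~\ref{exm:diskz2}, whose even domino group is $\mathbb{Z}\oplus\mathbb{Z}$, which again shows the homomorphism you build cannot in general have nonabelian image without further input.

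The paper's proof takes a different route precisely to avoid this parity cancellation. Both letters $a$ and $b$ are attached to occurrences of the \emph{same} crossing domino $d_1=s\cup s_1$ into the smallest component; what distinguishes $a$ from $b$ is not the component entered but the plug configuration on $\D_1\smallsetminus s_1$ and $\D_2$ (which squares of the small components are occupied by vertical dominoes), together with the floor parity. Between two consecutive floors containing $d_1$ this plug state necessarily flips, which is exactly what makes the 2-cell check work while letting a tiling contain a single floor of class $\mathcal{F}_0$: surjectivity is then proved constructively, using the spanning-tree tilings $\tl_p$ (whose floors avoid the classes) to produce explicit tilings with $\phi(\tl)=a$ and $\phi(\tilde\tl)=b$. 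The size hypothesis $|\D_1\cup\D_2|\ge 3$ is what makes these classes meaningful (and, as the paper remarks, it cannot be dropped). If you wanted to rescue your map you would have to show its image is a nonabelian (hence rank $\ge 2$ free) subgroup of $F_2$ and then compose with a retraction onto $F_2$; that is a substantially different and harder argument than the one you sketch, and it would still have to invoke the size hypothesis somewhere.
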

\begin{figure}[H]
\centerline{
\includegraphics[width=0.61\textwidth]{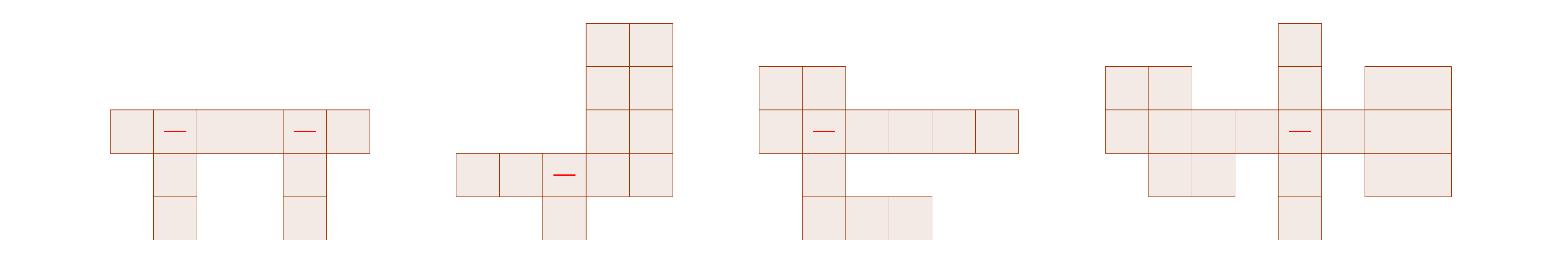}}
\caption{Examples of strongly irregular disks; unit squares $s$ as in Theorem~\ref{thm:irregdisk-squrdisc} are marked by a red line segment.}
\label{fig:irregdisk-squrdisc}
\end{figure}

Disks that are irregular but not strongly irregular seem to be relatively rare.
Indeed, the only family of disks for which we demonstrate this phenomenon is illustrated in the first row of Figure~\ref{fig:nonreg}.
Notice that these disks can be disconnected into three connected components by removing a unit square, with the largest component having size $|\D|-3$.

The text is divided as follows.
In Section~\ref{sec:definitions} we review the construction of the domino group (see \cite{Sal22}) and show in Example~\ref{exm:diskz2} the existence of disks which are irregular but not strongly irregular.
The proof of Theorem~\ref{thm:sizecomp} is presented in Section~\ref{sec:proofthmcomp}.
We compute the domino group of thin rectangles $[0,L] \times [0,2]$ with $L \geq 3$ in Section~\ref{sec:thinrec}.
Section~\ref{sec:proofthms123} is dedicated to the study of strongly irregular disks.

The author thanks Nicolau Saldanha for providing insightful ideas, comments and suggestions.
Acknowledgments are also given to Caroline Klivans, Robert Morris, Simon Griffiths and Yoshiharu Kohayakawa who carefully read the author's Master dissertation~\cite{Mar21}, which corresponds to part of this paper.
The author thanks the referees for the thorough reading and helpful remarks that greatly improved this work.
The support of CNPq, CAPES, FAPERJ and Projeto Arquimedes (PUC-Rio) are appreciated.
\section{Definitions}\label{sec:definitions}

Henceforth, unless otherwise stated, we always assume that quadriculated disks are nontrivial and balanced.
Consider a quadriculated disk $\D$.
A \emph{plug} $p$ is a union of an equal number of white and black unit squares contained in $\D$.
In particular, we have the empty plug $\pl_{\circ}=\emptyset$ and the full plug $\pl_{\bullet}=\D$.
The complement $\D \smallsetminus \text{int}(p)$ of a plug $p$ is also a plug and is denoted by $p^{-1}$; notice that $\pl_{\circ}^{-1} = \pl_{\bullet}$. 
The number of unit squares in $p$ is denoted by $|p|$.
Let $\mathcal{P}$ be the set of plugs in $\D$.
We say that two plugs $p_1,p_2 \in \mathcal{P}$ are \emph{disjoint} if their interiors are disjoint.

Sometimes, it is useful to consider a region more general than cylinders.
Let $p_1, p_2 \in \mathcal{P}$ be two plugs and consider two nonnegative integers $N_1$ and $N_2$ such that $N_2 > N_1 +2$.
The \emph{cork} $\R_{N_1,N_2;p_1,p_2}$ is defined as:
$$\R_{N_1,N_2;p_1,p_2} = (\D \times [N_1+1,N_2-1]) \cup (p_1^{-1} \times [N_1, N_1 + 1]) \cup (p_2^{-1} \times [N_2-1,N_2]).$$
In other words, the cork $\R_{N_1,N_2;p_1,p_2}$ is obtained from $\D \times[N_1,N_2]$ by removing the plug $p_1$ from the $(N_1+1)$-th floor and the plug $p_2$ from the $N_2$-th floor.
For instance, notice that $\R_{0,N;\pl_{\circ},\pl_{\circ}} = \R_N$.
The inverse of a tiling $\tl$ of $\R_{N_1,N_2; p_1, p_2}$ is defined as the tiling $\tl^{-1}$ of $ \R_{N_1,N_2; p_2,p_1}$ obtained by reflecting $\tl$ on the $xy$ plane.
If $\tl \in \mathcal{T}(\R_N)$ then $\tl * \tl^{-1} \approx \tl_{\text{vert}, 2N}$ (see Lemma 4.1 of~\cite{Sal22}).

Given a quadriculated disk $\D$, we construct a 2-complex $\mathcal{C}_\D$.
The domino group $G_\D$ will be the fundamental group of $\mathcal{C}_\D$.
The 0-skeleton, the set of vertices of $\mathcal{C}_\D$, is the set of plugs $\mathcal{P}$.
The edges of $\mathcal{C}_\D$ represent two-dimensional domino tilings of subregions of $\D$.
More precisely, attach an edge between two disjoint plugs $p_1$ and $p_2$ for each tiling of $\D \smallsetminus \text{int}(p_1 \cup p_2)$.
In particular, plugs that are not disjoint do not share any edge.
For the special case $p_1=p_2 = \pl_{\circ}$ (the empty plug), there is a loop based at $\pl_{\circ}$ for each tiling of $\D$; there are no other loops in $\mathcal{C}_\D$.
This construction defines the 1-skeleton of $\mathcal{C}_\D$.

We now attach the 2-cells.
First, attach a disk to each loop by wrapping its boundary twice around the loop, so that the result is a projective plane.
In other words, each loop is homotopically equivalent to a circle $f$, and we attach a disk via the map $f*f$.
For instance, see Figure~\ref{fig:loops}. 

\begin{figure}[H] 
\centerline{
\includegraphics[width=0.58\textwidth]{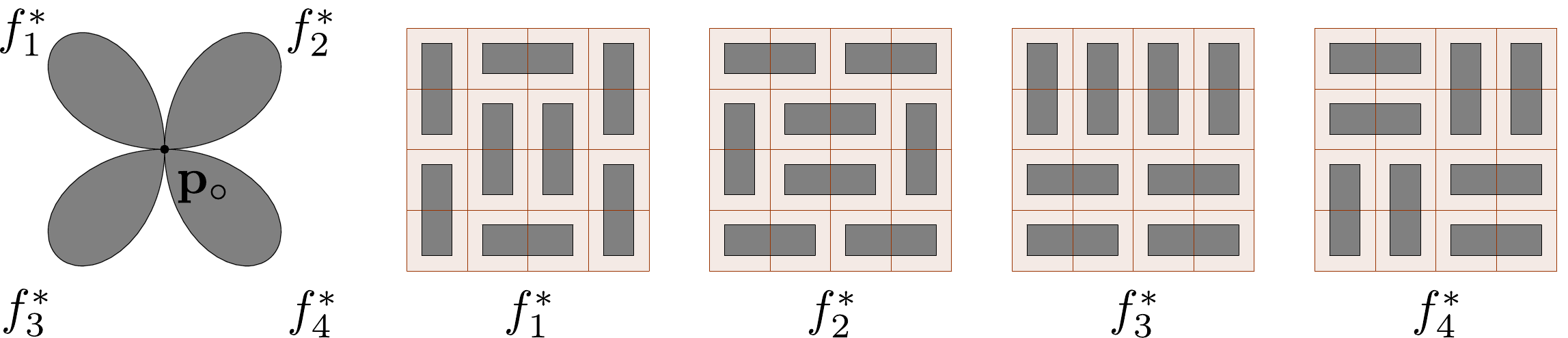}}
\caption{Four loops in $\mathcal{C}_\D$ for $\D = [0,4]^2$, the four petals are four projective planes.}
\label{fig:loops}
\end{figure}

The other 2-cells are attached injectively to certain bigons and quadrilaterals.
Notice that a bigon consists of two distinct disjoint plugs $p_1$ and $p_2$ connected by two edges representing distinct tilings of $\D \smallsetminus \text{int}(p_1 \cup p_2)$.
We attach a disk to each bigon whose edges correspond to tilings which differ by a single flip, as in Figure~\ref{fig:horflip}.

\begin{figure}[H] 
\centerline{
\includegraphics[width=0.57\textwidth]{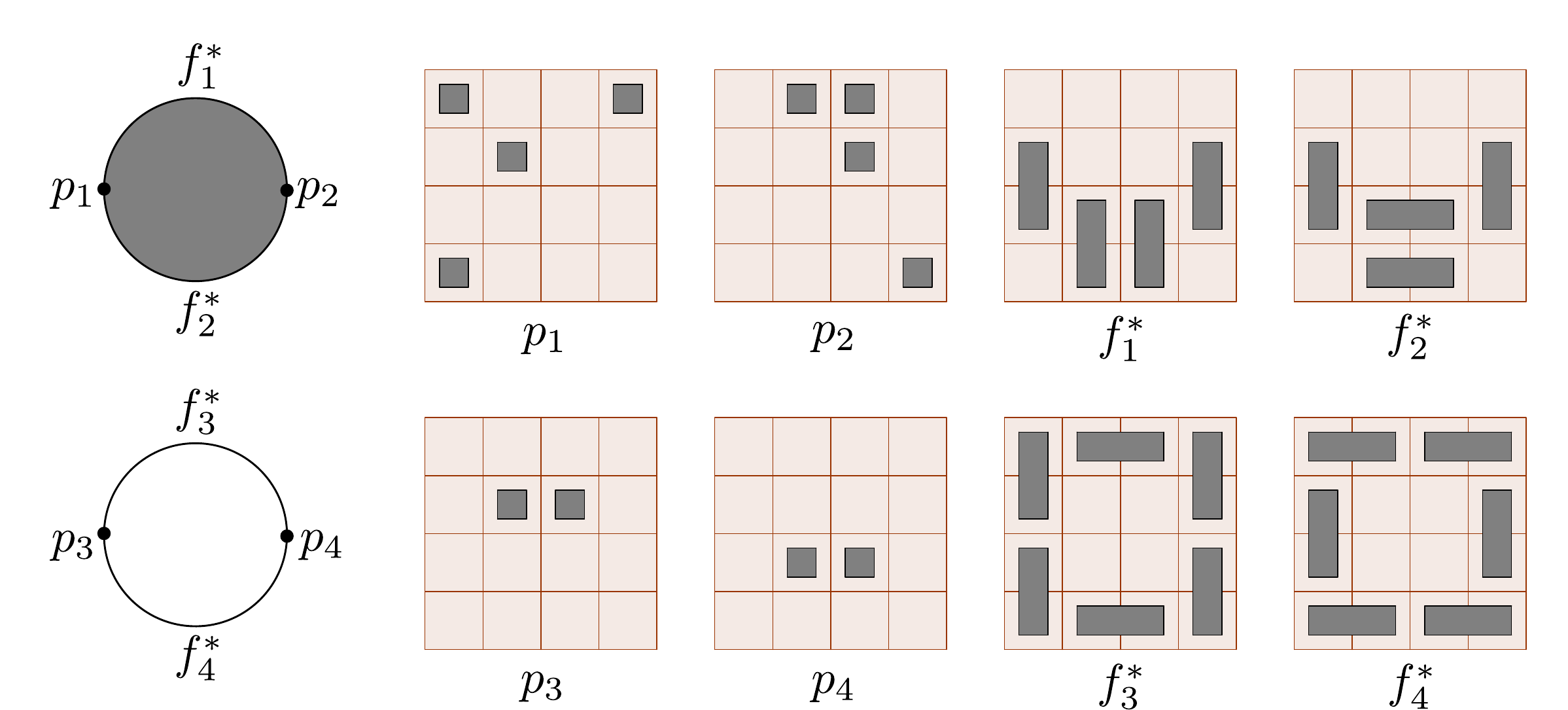}}
\caption{Two bigons in the complex of $\D=[0,4]^2$.
In the first row we attach a disk to the bigon.
The two tilings in the second row do not differ by a single flip.
Therefore, we do not attach a disk to the bigon in the second row.}
\label{fig:horflip}
\end{figure}

We attach a 2-cell to each quadrilateral constructed in the following manner. 
Let $p_1,p_2,p_3,p_4$ be four plugs such that $p_2$ is disjoint from $p_1$ and $p_4$.
Suppose that $p_2$ equals a union of $p_3$ and two adjacent unit squares, i.e. a domino $d \subset \D$.
Let $f_1^*$ be a tiling of $\D \smallsetminus \text{int}(p_1\cup p_2)$ and $f_2^*$ be a tiling of $\D \smallsetminus \text{int}(p_2 \cup p_4)$.
Therefore, $f_3^* = f_1^* \cup d$ and $f_4^* = f_2^* \cup d$ are tilings of $\D \smallsetminus \text{int}(p_1 \cup p_3)$ and $\D \smallsetminus\text{int}(p_3 \cup p_4)$, respectively.
Notice that $p_1,p_2,p_3,p_4$ and $f_1^*,f_2^*,f_3^*,f_4^*$ thus define a quadrilateral in the 1-skeleton of $\mathcal{C}_\D$.
Then, attach a 2-cell to this quadrilateral.
For instance, see Figure~\ref{fig:vertflip}.

\begin{figure}[H] 
\centerline{
\includegraphics[width=0.9\textwidth]{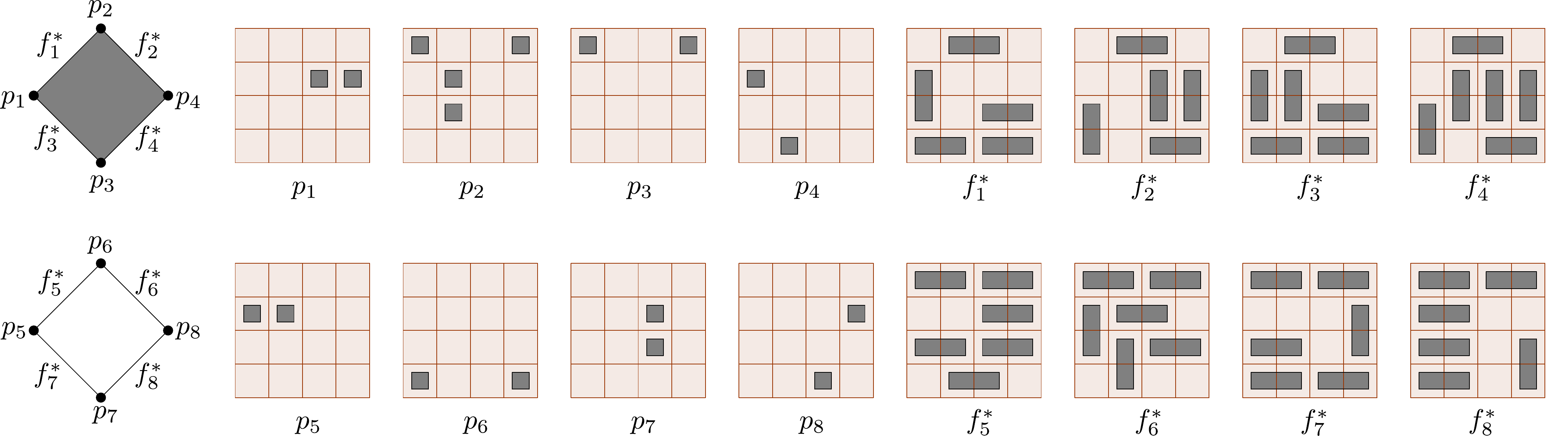}}
\caption{Two quadrilaterals in the complex of $\D=[0,4]^2$.
In the first row we attach a 2-cell to the quadrilateral.
The second row shows an example of a quadrilateral where we do not attach a 2-cell.}
\label{fig:vertflip}
\end{figure}

This finishes the construction of the complex $\mathcal{C}_\D$.
By definition, $G_\D = \pi_1(\mathcal{C}_\D, \pl_{\circ})$.
In most cases, it is impractical to draw the complex $\mathcal{C}_\D$.
For instance, if $\D = [0,4]^2$ then $\mathcal{C}_\D$ has 12870 vertices and 36 loops.
The calculation of the exact number of 1-cells and 2-cells requires a long computation.

The complex $\mathcal{C}_\D$ is related to tilings of cylinders of base $\D$.
Indeed, a tiling $\tl$ of $\D \times [0,N]$ corresponds to an oriented closed path of length $N$ based at $\pl_{\circ}$.
In order to construct this correspondence we proceed as follows.
We describe the behavior of $\tl$ at each floor $\D \times [K-1, K]$ by a triple $f_K= (p_{K-1},f_K^*,p_K)$.
The plug $p_K$ is the union of the unit squares $[a,a+1] \times [b,b+1]$ such that $[a,a+1]\times[b,b+1] \times [K-1,K+1]$ is contained in $\tl$.
The \emph{reduced $K$-th floor} $f_K^*$ is formed by the dominoes $d \subset \D$ such that $d \times [K-1,K]$ is contained in $\tl$.
Notice that the interiors of $p_{K-1}$ and $p_K$ are disjoint, and $f_K^*$ is a tiling of $\D \smallsetminus \text{int}(p_{K-1} \cup p_K)$.
Therefore, each triple $f_K$ corresponds to an oriented edge in $\mathcal{C}_\D$.
Thus, by identifying each floor with a triple, the tiling $\tl$ is described by a closed oriented path in $\mathcal{C}_\D$:
$$\tl = (\pl_{\circ},f_{1}^*,p_{1}) * (p_1,f_{2}^*,p_{2}) * \ldots * (p_{N-1},f_{N}^*,\pl_{\circ}).$$

In the complex $\mathcal{C}_\D$, we specify an orientation each time we move along an edge.
Consider two distinct disjoint plugs $p_1$ and $p_2$ and let $f_1^*$ be a tiling of $\D \smallsetminus \text{int}(p_1 \cup p_2)$.
Thus, $f_1^*$ is an edge of $\mathcal{C}_\D$.
We then denote the two possible orientations of $f_1^*$ by $f=(p_1, f_1^*,p_2)$ and $f^{-1}=(p_2, f_1^*, p_1)$.
Due to the connection between tilings and paths, oriented edges in $\mathcal{C}_\D$ are also called \emph{floors}.
Notice that, since projective planes are attached to loops, the two possible orientations of a loop $f=(\pl_{\circ}, f^*, \pl_{\circ})$ are homotopic.

Under this identification of tilings and paths, concatenation of paths in $\mathcal{C}_{\D}$ corresponds to concatenation of tilings.
In that sense, flips correspond to homotopies between paths.
Then, two tilings are equivalent under $\sim$ if and only if their corresponding paths in $\mathcal{C}_{\D}$ are homotopic (see Lemma 5.4 of~\cite{Sal22}).

The even domino group $G_\D^+$ is a subgroup of $G_\D$.
More precisely, $G_\D^+$ is the kernel of the homomorphism $\psi \colon G_\D \to \mathbb{Z}/(2)$ which takes a closed path of length $N$ to $N \mod 2$.
In other words, $G_\D^+$ consists of closed paths of even length.
Notice that $G_\D^+$ is a normal subgroup of index two of $G_\D$.

The even domino group is the fundamental group of a double cover $\mathcal{C}_{\D}^+$ of $\mathcal{C}_{\D}$, i.e., $\pi_1(\mathcal{C}_{\D}^+)=G_{\D}^+$.
The set of vertices of $\mathcal{C}_{\D}^+$ is the set $\mathcal{P} \times \mathbb{Z} \slash (2)$, which indicates the plug and the parity of its position.
Moreover, if $p_1$ and $p_2$ are two disjoint plugs then each tiling $f_1^*$ of $\D \smallsetminus (p_1 \cup p_2)$ corresponds to two edges in $\mathcal{C}_\D^+$.
More specifically, for each $i \in \mathbb{Z}/(2)$, there is an edge $f_{1,i}^*$ between $(p_1,i+1)$ and $(p_2,i)$.
Therefore, $\mathbf{f}_i=((p_1,i+1), f_{1,i}^*, (p_2,i))$ and  $\mathbf{f}_i^{-1}=((p_2,i), f_{1,i}^*,(p_1, i+1))$ define two orientations of $f_{1,i}^*$.
We prefer to describe the orientation of an edge in $\mathcal{C}_\D^+$ by a pair formed by an oriented edge in $\mathcal{C}_\D$ and an element $i \in\mathbb{Z}/(2)$.
The oriented edge of $\mathcal{C}_\D$ indicates the initial and the final vertex, the element of $\mathbb{Z}/(2)$ indicates the parity of the final vertex.
For instance, an oriented edge $\mathbf{f}_i=((p_1,i+1), f_{1,i}^*, (p_2,i))$ in $\mathcal{C}_\D^+$ is described by the pair $(f,i)$ where $f=(p_1,f_1^*,p_2)$ is an oriented edge in $\mathcal{C}_\D$.
Therefore, oriented edges in $\mathcal{C}_\D^+$ are also called \emph{floors with parity}.
Notice that if $\mathbf{f}_i=(f,i)$ then $\mathbf{f}_i^{-1}=(f^{-1},i+1)$.

We now briefly recall the definition of the twist of a tiling (see \cite{FKMS22, MS18, Sal22}).
The twist defines a homomorphism from the even domino group $G_{\D}^+$ to the integers $\mathbb{Z}$.
If $\D$ is nontrivial then this homomorphism is surjective (see Lemma 6.2 of~\cite{Sal22}).
In particular, the domino group of a nontrivial disk is infinite.

First, assume that the unit cubes in $\mathbb{R}^3$ are colored alternately in black and white.
For an arbitrary domino $d$, let $v(d) \in \{ \pm e_1, \pm e_2, \pm e_3 \} \subset \mathbb{R}^3$ be the unit vector from the center of the white cube to the center of the black cube of $d$.
For $u \in \{ \pm e_1, \pm e_2\}$ let $\mathcal{S}^u(d)$ be the interior of the set $(\bigcup_{t \in [0, \infty)} d+tu) \smallsetminus d$.

Given a tiling $\tl$ of $\D \times [0,N]$ and two dominoes $d_1$ and $d_2$ of $\tl$ let 
$$\tau^u(d_1,d_2) = 
\begin{cases}
\frac{1}{4}\det(v(d_2),v(d_1),u), & \quad d_2 \cap \mathcal{S}^u(d_1) \neq \emptyset \\
0, & \quad \text{otherwise.} \\
\end{cases} 
$$
Thus, $\tau^u(d_1,d_2)=0$ unless $d_1$ is a vertical (resp.\ horizontal) domino and $d_2$ is a horizontal (resp.\ vertical) domino such that $d_1$ and $d_2$ are contained in $\D \times [N_0,N_0+2]$ for some $N_0$.

The \emph{twist} of $\tl$ is defined as the sum:
$$\textsc{Tw}(\tl) = \sum_{d_1,d_2 \in \tl} \tau^u(d_1,d_2).$$
The twist is always an integer number and does not depend on the choice of $u$.
Additionally, the twist is invariant under flips.

We end this section by computing the even domino group of an important family of disks.
The disks in this family are irregular but not strongly irregular.

\begin{example}\label{exm:diskz2}
Consider $L \geq 3$.
Let $\D_L$ be the disk formed by the union of the rectangle $R_L = [0, 2L] \times [0, 1]$ and the two unit squares $s_0 = [1,2] \times [-1,0]$ and $s_L = [2L-2,2L-1] \times [-1,0]$; the disk $\D_3$ is shown in the top left corner of Figure~\ref{fig:nonreg}.
We claim that the even domino group $G_{\D_L}^+$ is isomorphic to $\mathbb{Z} \oplus \mathbb{Z}$.

Let $d_0 = [1,2] \times [-1,1]$ and $d_L= [2L-2,2L-1] \times [-1,1]$ be the dominoes that contain $s_0$ and $s_L$, respectively.
We say that a three-dimensional domino is \emph{nontrivial} if its projection on the $xy$-plane is equal to either $d_0$ or $d_L$.

By removing the nontrivial dominoes and omitting the dominoes contained in $(s_0 \cup s_L) \times [0,2N]$, a tiling of $\D_L \times\ [0,2N]$ can be viewed as a tiling of a planar subregion of $[0,2L] \times [0,2N]$; as shown in Figure~\ref{fig:irregz2}.
This perspective allows us to apply results regarding the connectivity under flips of two-dimensional tilings.
The terms horizontal and vertical retain their original sense.

\begin{figure}[H]
\centerline{
\includegraphics[width=0.55\textwidth]{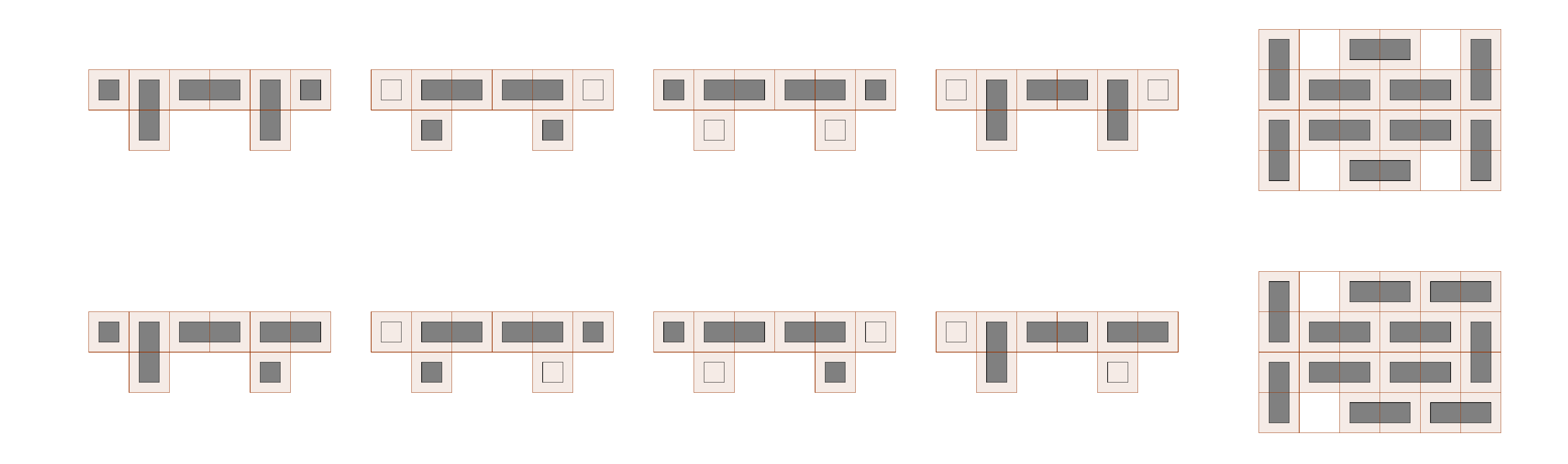}}
\caption{Two tilings of $\D_3 \times [0,4]$ and their depictions in a subregion of $[0,6]\times[0,4]$.
In this subregion, a domino parallel to the $x$-axis (resp.\ $y$-axis) corresponds to a three-dimensional horizontal (resp.\ vertical) domino.}
\label{fig:irregz2}
\end{figure}

We distinguish eight tilings $\tl_{L,1}, \tl_{L,2}, \ldots \tl_{L,8}$ of $\D_L \times [0,6]$.
For $L=3$, the two-dimensional perspective of these tilings are illustrated in Figure~\ref{fig:z2L3}.
For $L>3$, the tilings are derived from the case $L=3$ by translating the dominoes contained in $([4,6] \times [0,1] \times [0,6]) \cup ([4,5] \times [-1,0] \times [0,6])$ by $(2L-6,0,0)$, and by adding horizontal dominoes in $[4,2L-2] \times [0,1] \times [0,6]$. 
We first prove that these eight tilings generate the even domino group $G_{\D_L}^+$.
Then, we show that this family can be reduced to two tilings.

\begin{figure}[H]
\centerline{
\includegraphics[width=0.9\textwidth]{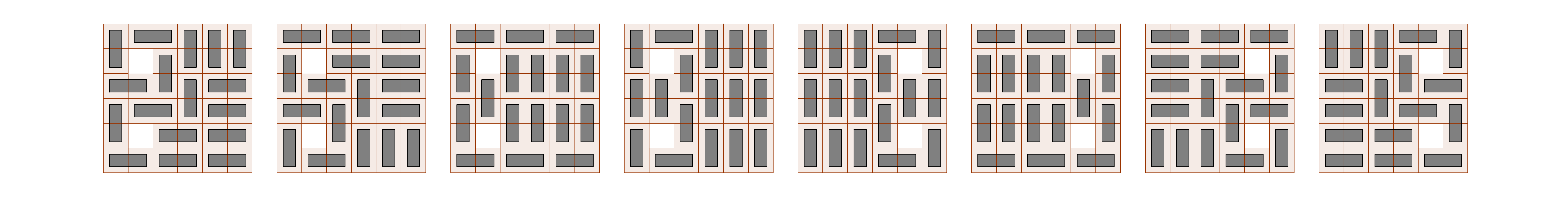}}
\caption{The 2D representation of eight tilings of $\D_3 \times [0,6]$.}
\label{fig:z2L3}
\end{figure}

Notice that the nontrivial dominoes in $\tl_{L,j}$ are separated by two floors.
This separation can be arbitrarily increased, as the relation $\sim$ allows the insertion of vertical floors between any two floors (see Lemma 5.3 of~\cite{Sal22}).
For any positive even $k$, let $\tl_{L,j,k}$ be the tiling obtained from $\tl_{L,j}$ by inserting $k-2$ vertical floors between the two nontrivial dominoes; in particular, $\tl_{L,j,2}$ is equal to $\tl_{L,j}$.
Thus, $\tl_{L, j} \sim \tl_{L,j,k}$ and the nontrivial dominoes in $\tl_{L,j,k}$ are separated by $k$ floors.

Consider an arbitrary fixed tiling $\tl$ of $\D_L \times [0,2N]$.
If $\tl$ does not contain nontrivial dominoes, then $\tl$ can be viewed as a planar tiling of $[0,2L] \times [0,2N]$.
Therefore, since any two tilings of a quadriculated disk can be joined by a sequence of flips, it follows that $\tl$ is equivalent to a vertical tiling.

Now, suppose $\tl$ contains nontrivial dominoes.
By following an approach similar to that in~\cite{Sal22} for obtaining generators of an even domino group, we add vertical floors, shift them downward, use flips to move nontrivial dominoes and decompose $\tl$ into a concatenation of tilings of even height, each containing exactly two nontrivial dominoes: $\tl \sim \tl_1*\tl_2*\ldots*\tl_{r}$.
Notice that the fact that $\tl_i$ has even height implies that the projections of its nontrivial dominoes coincide.

We proceed to show that each tiling $\tl_i$ is $\sim $-equivalent to $\tl_{L,j}$ for some $j$.
Without loss of generality, assume the nontrivial dominoes in $\tl_i$ are of the form $d_0 \times [K-1, K]$ and $d_0 \times [M-1, M]$, with $K < M$.
It suffices to consider the case $M-1-K \geq 2$.
Otherwise, the nontrivial dominoes are adjacent and therefore $\tl_i$ can be transformed into a tiling without nontrivial dominoes, which is equivalent to a vertical tiling.

Observe that, due to the shape of $\D_L$, the unit cubes $[0,1]^2 \times [K-1, K]$ and $[0,1]^2 \times [M-1, M]$ are covered by vertical dominoes $d_i$ and $\tilde{d_i}$, respectively.
There are four possible configurations, determined by whether the vertical dominoes covering these unit cubes also cover the adjacent unit cube in the floor above or below.
Choose $j$ such that the translation of $d_i$ and $\tilde{d_i}$ by $(0,0,2-K)$ is contained in $\tl_{L,j,M-1-K}$.

By adding vertical floors to the first and last floors, we can align the nontrivial dominoes in $\tl_i$ and $\tl_{L,j,M-1-K}$, so that both tilings now have the same height and contain the same nontrivial dominoes.
Consequently, they define two tilings of the same planar non simply connected region.
The choice of $j$ implies that these two tilings have the same flux (in the sense of~\cite{STRD95}).
Since and any two tilings with the same flux can be joined by a sequence of flips, it follows that $\tl_i \sim \tl_{L,j,M-1-K}$.

The previous paragraphs imply that the even domino group $G_{\D_L}^+$ is generated by the eight tilings $\tl_{L,j}$, we now reduce this family to two tilings.
Notice that the first two tilings in Figure~\ref{fig:z2L3} are inverses of each other, as are the last two tilings.
The remaining tilings are equivalent to the vertical tiling, since three flips transform them into a tiling that does not contain any nontrivial dominoes.
Thus, we conclude that $G_{\D_3}^+$ is generated by the first and last tilings, $\tl_3$ and $\tilde{\tl}_3$, respectively.
Similarly, for $L>3$, the tilings $\tl_L=\tl_{L,1}$ and $\tilde{\tl}_L=\tl_{L,8}$ generate $G_{\D_L}^+$.
A lengthy verification shows that $\tl_L$ and $\tilde{\tl}_L$ commute, i.e., $\tl_L*\tilde{\tl}_L \approx \tilde{\tl}_L*\tl_L$ (a similar verification is illustrated in a previous version of this paper~\cite{Mar22}).

Finally, we prove that $G_{\D_L}^+$ is isomorphic to $\mathbb{Z} \oplus \mathbb{Z}$.
Given a tiling of $\D_L \times [0,2N]$ we obtain another tiling by reflecting on the plane $x+y=2L-1$ the dominoes in $(s_L \cup ([2L-2,2L] \times [0,1])) \times [0,2N]$; as with the two tilings in Figure~\ref{fig:irregz2}. 
This construction defines an automorphism $\psi \colon G_{\D_L}^+ \to G_{\D_L}^+$.
The map $\phi \colon G_{\D_L}^+ \to \mathbb{Z} \oplus \mathbb{Z}$ that takes a tiling $\tl$ to $(\textsc{Tw}(\tl), \textsc{Tw} \circ \psi (\tl))$ is then a homomorphism.
We have that $\phi(\tl_L) = (-1,1) $ and $\phi(\tilde{\tl}_L)=(1,1)$.
Thus, the image of $\phi$ is isomorphic to $\mathbb{Z} \oplus \mathbb{Z}$. 
Moreover, since $\tl_L$ and $\tilde{\tl}_L$ commute, $\phi$ is injective.\hfill $\diamond$
\end{example}

\section[Proof of Theorem 1]{Proof of Theorem~\ref{thm:sizecomp}}\label{sec:proofthmcomp}

Before proving Theorem~\ref{thm:sizecomp}, we outline the steps involved in the proof.
We begin by establishing a general result on random tilings of cylinders.
For a disk $\D$, we show that for any fixed family of tilings of $\R_{N_0}$, the probability that a random tiling of $\R_N$ contains few tilings from the fixed family decays exponentially as $N$ goes to infinity.
We then associate the probability that two (independent) random tilings of $\R_N$ are equivalent under flips with the probability of a random tiling, formed alternately by concatenations of tilings that are and are not contained in this fixed family, equals the vertical tiling.
Given a surjective homomorphism $\phi \colon G_{\D}^+ \to F_2$, we consider a fixed family of tilings corresponding to the values $a,a^{-1},b,b^{-1},e$.
By taking the image under $\phi$, we then relate the probability that two random tilings are equivalent under flips to the probability that a lazy random walk on $F_2$, which takes a deterministic step after each random step, returns to the identity. 
Theorem~\ref{thm:sizecomp} then follows from the fact that the later probability decays exponentially as the length of the walk increases.

Let $N_0 \in \mathbb{N}$ and consider a set of tilings $\mathcal{B} \subseteq \mathcal{T}(\R_{N_0})$.
We say that a tiling $\tl$ of a cork $\R_{N_1, N_2; p_1,p_2}$ is formed by $(k,M)$-blocks of $\mathcal{B}$ if there exist distinct nonnegative integers $b_1 < b_2 <\ldots < b_k$ and tilings $\bl_1, \bl_2, \ldots, \bl_k \in \mathcal{B}$ such that, for each $M_j = b_j(2M + N_0) + M+N_1$, the restriction of $\tl$ to $\D \times [M_j, M_j + N_0]$ equals~$\bl_j$.
We denote by $\text{block}_{\mathcal{B}}^{M}(\tl)$ the maximum nonnegative integer $k$ such that $\tl$ is formed by $(k,M)$-blocks of $\mathcal{B}$.
The following lemma shows that $\text{block}_{\mathcal{B}}^{M}(\tl)$ is almost never very small.
We provide an elementary proof of this result.
However, a more direct approach using large deviation principles for additive functionals of Markov chains might also be feasible (see, e.g., Chapter IV of~\cite{Hol2000}).

\begin{lemma}\label{lem:block}
Consider a quadriculated disk $\D$, $N_0 \in \mathbb{N}$ and $\mathcal{B} \subseteq \mathcal{T}(\R_{N_0})$.
For $N > N_0$, let $\mathbf{T}$ be a random tiling of $\D \times [0,N]$.
Then there exist $M \in 2\mathbb{N}$ and constants $C,c \in (0,1)$ such that $\mathbb{P}(\textup{block}_{\mathcal{B}}^{M}(\mathbf{T})<CN)=o(c^N)$.
\end{lemma}

\begin{proof}
Recall that $\pl_{\circ}$ is the empty plug.
If follows from from Lemma 13 of \cite{Sal21} that there exist $\epsilon > 0$ and an even integer $N_{\epsilon} > 2|\D|$ such that if $j \geq N_{\epsilon}$, $\tilde{N} \geq j +N_{\epsilon}$ and $p_0,p_{\tilde{N}} \in \mathcal{P}$ then for a random tiling $\mathbf{\tilde{T}}$ of $\mathcal{R}_{0,\tilde{N}; p_0,p_{\tilde{N}}}$ we have $\mathbb{P}(\textup{plug}_j(\mathbf{\tilde{T}})=\mathbf{p}_{\circ}) > \epsilon$.
Let $m=\underset{{p_i, p_j \in \mathcal{P}}}{\max}|\mathcal{T}(\R_{0, N_{\epsilon}; p_i, \pl_{\circ}})| |\mathcal{T}(\R_{0,N_{\epsilon}+N_0; \pl_{\circ}, p_j})|$ and $\delta = \epsilon |\mathcal{B}|m^{-1}$.
We claim that if $\tilde{N} = 2N_{\epsilon}+N_0$ then $\mathbb{P}(\text{block}_{\mathcal{B}}^{N_{\epsilon}}(\mathbf{\tilde{T}}) \geq 1) > \delta$.

Since $N_{\epsilon} \geq 2|\D|$, by Lemma $4.1$ of~\cite{Sal22}, there exist tilings $\tl_1$ of $\R_{0,N_{\epsilon},p_0, \pl_{\circ}}$ and $\tl_2$ of $\R_{0, N_{\epsilon},\pl_{\circ}, p_{\tilde{N}}}$. 
Thus, each tiling $\bl$ in $\mathcal{B}$ defines a tiling $ \tilde{\tl} =\tl_1 * \bl * \tl_2$ of the cork $\R_{0,\tilde{N};p_0,p_{\tilde{N}}}$ with $\textup{plug}_{N_{\epsilon}}(\mathbf{\tilde{\tl}})=\mathbf{p}_{\circ}$ and $\text{block}_{\mathcal{B}}^{N_{\epsilon}}(\mathbf{\tilde{\tl}}) =1$.
Moreover, notice that the number of tilings of $\R_{0,\tilde{N},p_0, p_{\tilde{N}}}$ such that the $N_{\epsilon}$-th plug equals $\pl_{\circ}$ is smaller than $m$.
Therefore, $\mathbb{P}(\text{block}_{\mathcal{B}}^{N_{\epsilon}}(\mathbf{\tilde{T}}) \geq 1 \mid \textup{plug}_{N_{\epsilon}}(\mathbf{\tilde{T}}) = \mathbf{p}_{\circ}) \geq |\mathcal{B}| m^{-1}$ and the claim above is proved.

Take $M= N_{\epsilon}$ and $C=\frac{\delta}{2(2M+N_0)}$.
For each $i=1,2, \ldots , \big\lfloor\frac{N}{2M+N_0}\big\rfloor$ let $A_i$ be the event that the restriction of $\mathbf{T}$ to $\D \times [(i-1)(2M+N_0), i(2M+N_0)]$ is formed by $(1,M)$-blocks of $\mathcal{B}$.
Notice that the previous paragraph implies that $\mathbb{P}(A_i \mid \mathbf{T} $\text{ constructed up to floor} $(i-1)(2M+N_0)) >\delta$.
Therefore, we have that $\mathbb{P}(\textup{block}_{\mathcal{B}}^{M}(\mathbf{T})<CN) \leq \mathbb{P}( X < CN)$ where $X$ is a random variable with binomial distribution $\text{Bin}(\big\lfloor \frac{N}{2M+N_0} \big\rfloor,\delta)$.
By writing $X$ as a sum of i.i.d.\ random variables with Bernoulli distribution Bern$(\delta)$, the result follows from Chernoff's inequality.
\end{proof}

\begin{lemma}\label{lem:ap}
Let $(X_t)_{t \geq 1}$ be a sequence of i.i.d.\ random variables in $F_2$ with $X_1$ uniformly distributed in $\{e,a,a^{-1},b,b^{-1}\}$.
Let $(y_t)_{t \geq 0}$ be a sequence of elements in $F_2$.
Then there exists $\alpha \in (0,1)$ such that $\prob(y_0X_1y_1\ldots X_ty_t=e) \leq \alpha^t$ for all $t \geq 0$.
\end{lemma}

\begin{proof}
We are interested in a random walk on $F_2$ where random and deterministic steps alternate, which is equivalent to applying a permutation to the vertices of $F_2$ after each random step.
By Theorem 1.1 of~\cite{ARSY24}, this process does not slow down the walk $(X_t)_{t \geq 1}$, i.e., $\prob(y_0X_1y_1\ldots X_ty_t=e) \leq \prob(X_1 \ldots X_t=e)$ for all $t \geq 0$; this inequality is also proved independently in a previous version of this paper~\cite{Mar22}.
In addition, by the Varopoulos-Carne bound (see, e.g., Theorem 13.4 of~\cite{LY17}), we obtain a constant $\alpha \in (0,1)$ such that $\prob(X_1\ldots X_t=e) \leq \alpha^t$ for all $t \geq 0$.
\end{proof}

\begin{proof}[Proof of Theorem~\ref{thm:sizecomp}]
Let $\phi \colon G_{\D}^+ \to F_2$ be a surjective homomorphism.
Consider $N_0 \in 2\mathbb{N}$ sufficiently large so that there exist tilings $\bl_a$ and $\bl_b$ of $\R_{N_0}$ with $\phi(\bl_a)=a$ and $\phi(\bl_b)=b$.
Let $\bl_{e}$ be the vertical tiling $\tl_{\text{vert},N_0}$.
Define the set of tilings $\mathcal{B} = \{ \bl_a, \bl_a^{-1}, \bl_b, \bl_b^{-1}, \bl_{e} \} \subset \mathcal{T}(\R_{N_0})$.
By Lemma~\ref{lem:block}, there exist $M \in 2\mathbb{N}$ and $C \in (0,1)$ such that the probability that $\text{block}_{\mathcal{B}}^{M}(\mathbf{T}_i)$ is smaller than $r= \lceil CN \rceil$ goes to zero exponentially. 
We may therefore assume that $\mathbf{T}_1$ and $\mathbf{T}_2$ are formed by at least $(r, M)$-blocks of $\mathcal{B}$.

Let $\mathcal{B}_r(\R_N) \subset \mathcal{T}(\R_N)$ be the set of tilings formed by at least $(r, M)$-blocks of~$\mathcal{B}$.
For each tiling $\tl \in \mathcal{B}_r(\R_N)$ let $b_{1,\tl} < b_{2,\tl} < \ldots < b_{r,\tl}$ be the first $r$ nonnegative integers such that, for $j=1,2, \ldots, r$ and $M_{j, \tl}=b_{j, \tl}(2M+N_0) + M$, the restriction of $\tl$ to $\D \times [M_{j, \tl}, M_{j, \tl}+N_0]$ equals a tiling in $\mathcal{B}$.
We now define an equivalence relation $\approxeq$ on $\mathcal{B}_r(\R_N)$: $\tilde{\tl} \approxeq \hat{\tl}$ if and only if $b_{j, \tilde{\tl}} = b_{j, \hat{\tl}}$ (for all $j$) and $\tilde{\tl}$ equals $\hat{\tl}$ in the region $(\D \times [0,N]) \smallsetminus (\bigcup_{j=1}^r \D \times [M_{j,\tilde{\tl}}, M_{j, \tilde{\tl}} + N_0])$.

Let $B_1, B_2, \ldots, B_l$ be the $\approxeq$-equivalence classes.
Notice that, for each $i \leq l$, there are fixed tilings $\tl_{0, i}, \tl_{1, i}, \ldots, \tl_{r, i}$ such that $B_i$ consists of all tilings of the form $\tl_{0,i}*\bl_1 * \tl_{1,i} * \bl_2 * \tl_{3,i} * \ldots * \bl_r * \tl_{r,i}$ with $\bl_1,\bl_2, \ldots, \bl_r \in \mathcal{B}$.
Thus, each $\approxeq$-equivalence class has size exactly $|\mathcal{B}|^r$.

Suppose that $\mathbf{T}_1$ has been chosen first from $\mathcal{B}_r(\R_N)$, say $\mathbf{T}_1 = \tl$.
The probability that there exists a sequence of flips joining $\mathbf{T}_2$ and $\tl$ is less than or equal to the probability that $\phi$ takes $\mathbf{T}_2*\tl^{-1}$ to the identity.
We prove that the later probability decays exponentially with $N$.
To this end, it suffices to show that the conditional probabilities $\prob(\phi(\mathbf{T}_2*\tl^{-1})=e \mid \mathbf{T}_2 \in B_i)$ are uniformly bounded by $\alpha^r$ for some constant $\alpha \in (0,1)$.

Consider a sequence $(X_t)_{t \geq 1}$ of i.i.d.\ random variables which assume values in $F_2$ such that $\mathbb{P}(X_1=x) = \frac{1}{5}$ for $x \in \{a,a^{-1},b,b^{-1},e\}$.
Then, by construction, $\prob(\phi(\mathbf{T}_2*\tl^{-1})=e \mid \mathbf{T}_2 \in B_i)$ is equal to  $\prob(\phi(\tl_{0, i})X_1\phi(\tl_{1, i}) \ldots X_{r} \phi(\tl_{r,i}*\tl^{-1}) =e)$. 
The result now follows from Lemma~\ref{lem:ap}.
\end{proof}

\section{The domino group of thin rectangles}\label{sec:thinrec}

\vbox{In this section we prove Theorem~\ref{thm:thinrectangles}, that is, we compute the even domino group of $\D_L = [0,L] \times [0,2]$ for $L \geq 3$.
The strategy consists in constructing a homomorphism from $G_{\D_L}^+$ to the group $G_L^+$ defined in Equation~\ref{eq:1.1}.
We then prove that this homomorphism is in fact an isomorphism.}

The computation of $G_{\D_L}^+$ is inspired by the results of \cite{MS15}, where a flip invariant for tilings of duplex regions is exhibited. 
By performing a rotation, we think of tilings of $\D_L \times [0,N]$ as tilings of the duplex region $[0,N] \times [0,L] \times [0,2].$
Therefore, in this section, we say that a flip is horizontal if it is performed in two dominoes contained in one of the two floors of the new rotated tiling; the flip is vertical otherwise.  

Let $\tl$ be a tiling of $\D_L \times [0,N]$ and orient each domino contained in $\tl$ from its white unit cube to its black unit cube.
By projecting the two floors of $\tl$ on the plane $z=0$, we obtain a \emph{diagram} $\mathcal{I}_{\tl}$ on $[0,N] \times [0,L]$ containing oriented disjoint cycles and \emph{jewels}, i.e., unit squares formed by the projections of dominoes parallel to the $z$-axis.
A cycle is \emph{trivial} if it has length two and a jewel is \emph{trivial} if it is not enclosed by a cycle.
The color of a jewel $j$ is defined as the color of its corresponding unit square in the rectangle $[0,N] \times [0,L]$.
We write $\text{color}(j)=+1$ if $j$ is white and $\text{color}(j)=-1$ if $j$ is black.
The Figure~\ref{fig:duplex} shows an example of a tiling and its associated diagram; we always exhibit trivial cycles in green, counterclockwise cycles in red and clockwise cycles in blue.
\begin{figure}[H] 
\centerline{
\includegraphics[width=0.65\textwidth]{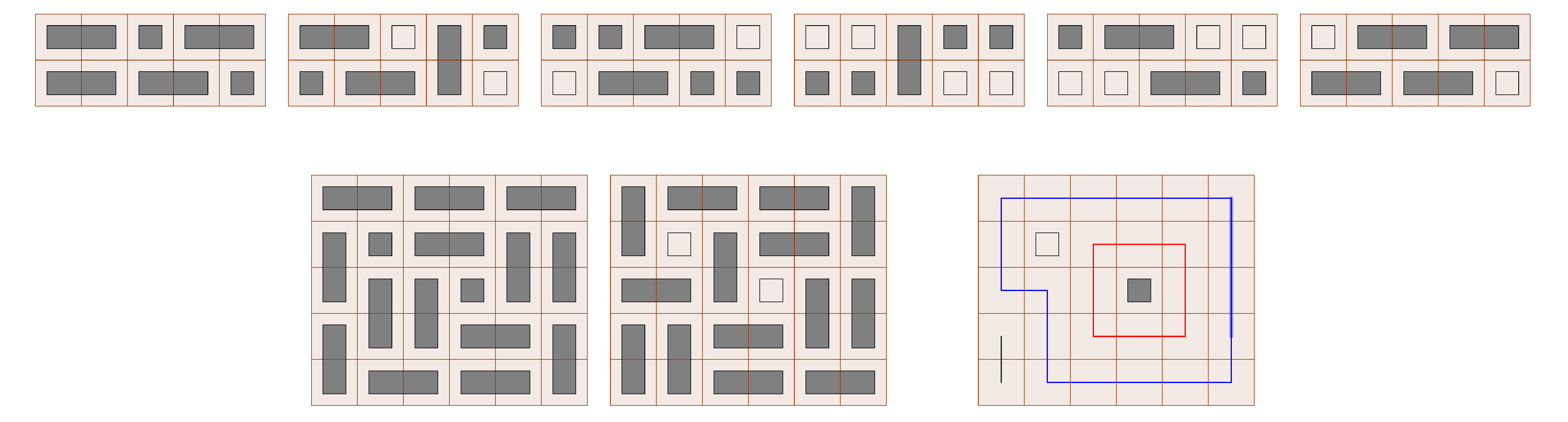}}
\caption{The first row shows a tiling $\tl$ of $\D_5 \times [0,6]$.
The second row shows $\tl$ after a rotation and its diagram $\mathcal{I}_{\tl}$.}
\label{fig:duplex}
\end{figure}

We order the jewels contained in $\mathcal{I}_{\tl}$.
Consider two jewels $j_1 = [a,a+1] \times [b,b+1]$ and $j_2 = [c,c+1] \times [d,d+1]$.
If $j_1$ and $j_2$ are in different columns, i.e. $a \neq c$, we write $j_1 < j_2$ if $a<c$.
If $j_1$ and $j_2$ are in the same column, i.e. $a=c$, we write $j_1 < j_2$ if $b > d$.
When this order is used, jewels are called \textit{ordered jewels}.

For a jewel $j$ let $\text{wind}(j)$ be the sum of the winding numbers $\text{wind}(j, \gamma)$ taken over all the cycles $\gamma$ in $\mathcal{I}_{\tl}$.
Notice that $\text{wind}(j)$ is an integer and $|\text{wind}(j)| \leq \genfrac{\lfloor}{\rfloor}{0.5pt}{1}{L-1}{2}$.
We are especially interested in the winding numbers of jewels contained in the same column.
Consider the finite set $R_L = \{ (m,n) \in \mathbb{Z}^2 \colon \max\{|m|,|n|,|m-n|\} < \genfrac{\lfloor}{\rfloor}{0.5pt}{1}{L}{2} \}$.
The Figure~\ref{fig:RlZ2} below shows the elements of $R_{10}$ and two tilings of $\D_{10} \times [0,12]$, notice that in the figure any two jewels $j_1$ and $j_2$ contained in the same column are such that $(\win(j_1), \win(j_2)) \in R_{10}$.
\begin{figure}[H] 
\centerline{
\includegraphics[width=0.75\textwidth]{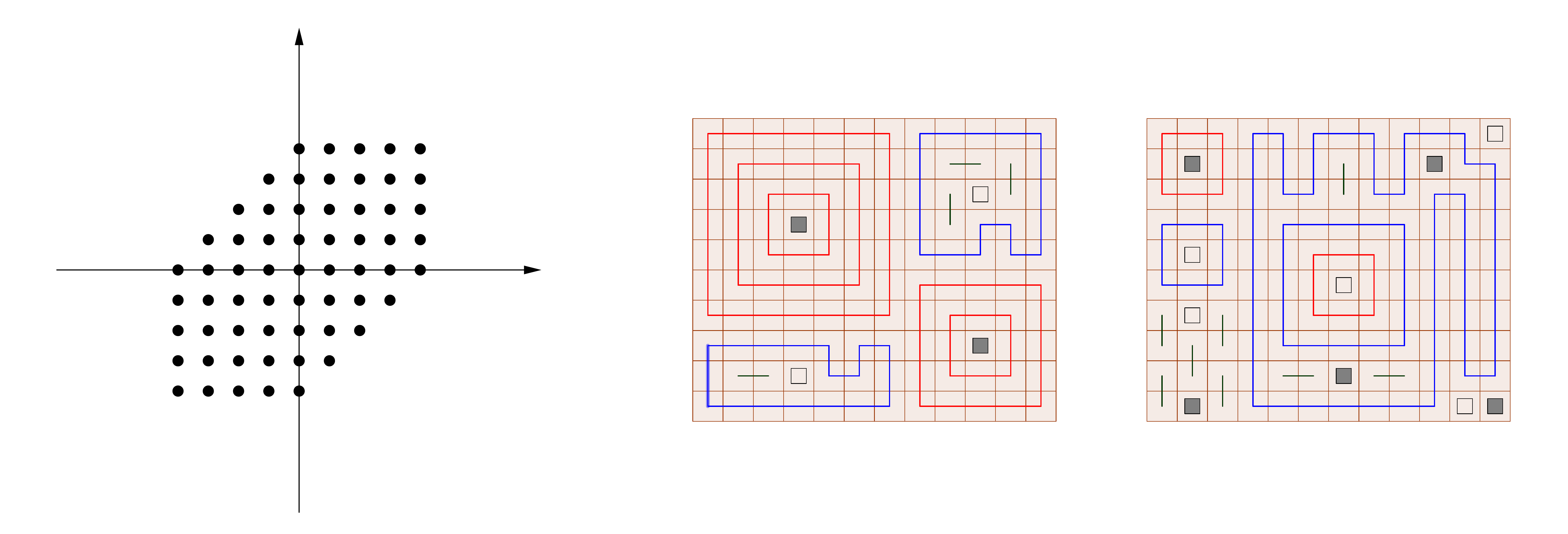}}
\caption{The cartesian plane with elements of $R_{10}$ shown in black, and two diagrams of tilings of $\D_{10} \times[0,12].$}
\label{fig:RlZ2}
\end{figure}

\begin{lemma}\label{lem:vertflip} 
Consider $L \geq 3$.
Then $(m,n) \in R_L$ if and only if there exist $N \in \mathbb{N}$ and a tiling $\tl$ of $\D_L \times [0,2N]$ such that $\mathcal{I}_{\tl}$ contains two jewels $j_1$ and $j_2$ in the same column with $(\win(j_1), \win(j_2))=(m,n).$
\end{lemma}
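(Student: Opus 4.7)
The plan is to prove both directions of the equivalence by a signed-crossing count of cycles with a vertical line through the column of $j_1$ and $j_2$.

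For necessity, suppose $\tl$ contains jewels $j_1, j_2$ in a common column $\{x_0\} \times [0, L]$ at rows $y_1 > y_2$. For each non-jewel square $s_y = [x_0, x_0+1] \times [y, y+1]$ in that column, let $\sigma(s_y) \in \{-1, 0, +1\}$ be the signed number of intersections of a vertical line slightly perturbed from the column center with the cycle of $\mathcal{I}_{\tl}$ passing through $s_y$. Since each cycle is a simple closed curve and the winding of such a curve around a point equals the signed count of intersections of any outgoing ray with it, one obtains
\[
m = \win(j_1) = \sum_{y > y_1} \sigma(s_y) =: S_1, \qquad n = \win(j_2) = S_1 + S_2,
\]
where $S_2 := \sum_{y_2 < y < y_1} \sigma(s_y)$. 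Because a closed cycle has zero signed intersection with an entire vertical line, we also have $S_1 + S_2 + S_3 = 0$, where $S_3 := \sum_{y < y_2} \sigma(s_y)$; hence $n = -S_3$ and $m - n = -S_2$. The sums $S_1, S_2, S_3$ involve $L - 1 - y_1$, $y_1 - y_2 - 1$, and $y_2$ terms respectively, each of absolute value at most $1$, so
\[
|m| + |n| + |m - n| \;=\; |S_1| + |S_3| + |S_2| \;\leq\; L - 2.
\]
Combining this with the elementary identity $|m| + |n| + |m - n| = 2 \max(|m|, |n|, |m - n|)$ (checked by cases on the signs of $m$ and $n$) yields $\max(|m|, |n|, |m - n|) \leq \lfloor L/2 \rfloor - 1 < \lfloor L/2 \rfloor$, i.e., $(m, n) \in R_L$.

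For sufficiency, given $(m, n) \in R_L$, the inequality $|m| + |n| + |m - n| \leq L - 2$ permits the choices $y_2 = |n|$ and $y_1 = |n| + |m - n| + 1$, ensuring $y_1 \leq L - 1 - |m|$. I then construct, for $N$ large, an explicit tiling of $\D_L \times [0, 2N]$ whose diagram has jewels at $(x_0, y_1)$ and $(x_0, y_2)$ for some column $x_0$, and contains: $|m|$ nested length-$8$ cycles enclosing only $j_1$ in the region above $j_1$; $|n|$ nested cycles enclosing only $j_2$ below $j_2$; and $|m - n|$ additional cycles that separate $j_1$ from $j_2$ in the region between them. Orientations are assigned so that the signed contributions match $m$, $-n$, and $n - m$ respectively. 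The remainder of the rectangle is covered by trivial length-$2$ cycles (stacked vertical dominoes), which is possible once $N$ is chosen large enough to place the cycles in separated $x$-positions.

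The main technical obstacle is the rigorous setup of $\sigma$ together with the sign conventions for oriented cycles, so that the identities $\win(j_i) = \sum \sigma$ follow cleanly from Jordan's curve theorem rather than from ad-hoc local case analysis at the turns and vertical passes of a cycle. Once these conventions are fixed, both halves of the lemma reduce to the single inequality $|m| + |n| + |m - n| \leq L - 2$ together with its constructive realization via prescribed $\pm 1$ assignments of $\sigma$.
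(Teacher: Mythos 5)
Your necessity argument is fine: the signed-crossing bookkeeping with a perturbed vertical line is a slightly more systematic version of the paper's own count (at least $|m|$ squares above $j_1$, $|n|$ below $j_2$, $|m-n|$ between, hence $|m|+|n|+|m-n|\leq L-2 = 2\max\{|m|,|n|,|m-n|\}+\text{(stuff)}$), and the technicalities you flag about $\sigma$ are routine. The gap is in the sufficiency direction. Your construction places the jewels at rows $y_2=|n|$ and $y_1=|n|+|m-n|+1$, leaving exactly $|m-n|$ column squares strictly between them, and then asks for $|m|$ disjoint cycles enclosing only $j_1$ and $|n|$ disjoint cycles enclosing only $j_2$ (plus $|m-n|$ more). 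But every cycle enclosing exactly one of the two jewels must cross the column in a square strictly between them, so your families alone would need at least $|m|+|n|$ such squares. When $\sign(m)=\sign(n)$ and both are nonzero, $|m|+|n|>|m-n|$ and the construction cannot be realized; worse, for pairs such as $(m,n)=(4,4)\in R_{10}$ one has $2(|m|+|n|)+2>L$, so no placement of the jewels in a width-$L$ strip admits disjoint families of $|m|$ cycles around only $j_1$ and $|n|$ around only $j_2$ at all. (The ``$|m|$ nested length-$8$ cycles'' phrasing is also internally inconsistent: nested cycles around a fixed jewel must grow in size.) This same-sign regime, which is forced whenever $|m|+|n|\geq\lfloor L/2\rfloor$, is precisely where the paper switches to a different construction: a single family of $|m|$ nested cycles around $j_1$, with $j_2$ placed inside the innermost $|n|$ of them, so that $\win(j_2)=n$ comes for free from $\sign(m)=\sign(n)$; your ``$|m-n|$ separating cycles with contribution $n-m$'' cannot substitute for this, since a cycle enclosing neither jewel contributes $0$ to both winding numbers. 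To repair your proof, split as the paper does: if $|m|+|n|<\lfloor L/2\rfloor$, use two disjoint nested families (with the jewels far enough apart, at distance at least $|m|+|n|+1$, not $|m-n|+1$); otherwise use cycles enclosing both jewels.
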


\begin{proof}
We first prove the \emph{if} direction.
Suppose that $j_1=[a,a+1] \times[b,b+1]$ and $j_2=[a,a+1] \times [c,c+1]$ with $b>c$.
Since $\win(j_1) = m$ there exists at least $|m|$ unit squares in $[a,a+1] \times [b+1,L].$
Analogously, there exists at least $|n|$ unit squares in $[a,a+1] \times [0,c].$	
Moreover, we must have at least $|m-n|$ cycles not enclosing both jewels. Therefore, there exists at least $|m-n|$ unit squares in $[a,a+1] \times  [b+1,c]$.
Then, $|m|+|n|+|m-n| \leq L-2$ and we have $ \max\{|m|,|n|,|m-n|\} \leq \frac{L-2}{2}$.

For the \emph{only if} direction let $(m,n) \in R_L$ and take $N$ sufficiently large.
In order to show the existence of a tiling $\tl$ of $\D_L \times [0,2N]$ with the desired properties we proceed backwards.
Indeed, since a tiling is entirely determined by its diagram, it suffices to construct $\mathcal{I}_{\tl}$.

We first deal with the case in which $|m|+|n| < \lfloor \frac{L}{2} \rfloor$.
Consider two disjoint squares centered in the same column: $s_m$ of side $2|m|+1$ and $s_n$ of side $2|n|+1$.
Let the jewel $j_1$ (resp. $j_2$) be the center of $s_m$ (resp. $s_n$).
Construct $|m|$ cycles in $s_m$ and $|n|$ cycles in $s_n$ such that $\win(j_1) = m$ and $\win(j_2) =n$.
Now, to obtain $\mathcal{I}_{\tl}$, fill the rest of $[0,2N] \times [0,L]$ with trivial cycles and trivial jewels.

We are left with the case $|m|+|n| \geq \lfloor \frac{L}{2} \rfloor$, so that $\sign(m)=\sign(n)$.
Suppose that $|m| \geq |n|$ and write $|n|=|m| - r$, where $0 \leq r \leq |m|.$
The square $s=[0,2|m|+2]^2$ is contained in $[0,2N] \times [0,L]$, since $|m| < \lfloor \frac{L}{2} \rfloor$.
Let $j_1=[|m|,|m|+1] \times [|m|+1, |m|+2]$ and $j_2=[|m|,|m|+1] \times [|m|-r,|m|-r+1]$.
Construct $m$ cycles in $s$ such that $\win(j_1)=m$.
We have $\win(j_2)=n$, as $\sign(m)=\sign(n)$.
The result then follows by proceeding as in the previous paragraph.
\end{proof}

Recall that, as in Equation~\ref{eq:1.1}, $G_L^+ = \langle S_L \mid [a_m, a_n] =1 \text{ for } (m,n) \in R_L \rangle$, where $S_L= \{ a_i \colon i \in \mathbb{Z}_{\neq 0}\text{  and } |i| \leq \genfrac{\lfloor}{\rfloor}{0.5pt}{1}{L-1}{2} \}$.
We construct a map $$\Phi \colon \underset{N \geq 1}{\bigcup} \mathcal{T}(\D_L \times [0,2N]) \to G_L^+.$$
Consider a tiling $\tl$ of $\D_L \times [0,2N]$ and let $j_1 < j_2 < \ldots < j_k$ be the ordered jewels in $\mathcal{I}_{\tl}$. 
Define $\Phi(\tl)=b_1 \ldots b_k$ where $b_i = a_{\win(j_i)}^{\col(j_i)}$ if $\win(j_i) \neq 0$ and $b_i = e$ if $\win(j_i) = 0$. 

\begin{lemma}\label{lem:homthinrect}
The map $\Phi$ induces a homomorphism $\phi \colon G_{\D_L}^+ \to G_L^+$.
\end{lemma}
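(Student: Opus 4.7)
The plan is to verify three properties of $\Phi$: (i) multiplicativity under concatenation, (ii) triviality on vertical tilings, and (iii) invariance under flips. Together with the fact that $G_{\D_L}^+$ admits a presentation whose relators come from flips and vertical augmentations, these three facts will force $\Phi$ to descend to a well-defined homomorphism $\phi \colon G_{\D_L}^+ \to G_L^+$. Properties (i) and (ii) should be essentially cosmetic. For (i), in $\tl_1 * \tl_2$ the two halves meet along the empty plug, so no domino of $\tl_1 * \tl_2$ crosses the concatenation plane; the diagram of $\tl_1 * \tl_2$ therefore splits as the disjoint union of the diagram of $\tl_1$ and an $x$-shifted copy of the diagram of $\tl_2$, and the jewel ordering places all of $\tl_1$'s jewels before those of $\tl_2$, giving $\Phi(\tl_1 * \tl_2) = \Phi(\tl_1)\Phi(\tl_2)$. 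For (ii), the vertical tiling contains no $z$-parallel dominoes in the rotated picture, so its diagram has no jewels and $\Phi(\tl_{\mathrm{vert},2N}) = e$.

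For (iii) I would distinguish horizontal from vertical flips. A horizontal flip rotates two dominoes inside a single floor of the duplex and involves no $z$-domino, so the set of jewels is unchanged. It rearranges cycles only within a $2 \times 2$ region, and because this local change can be realized by adding the boundary of a 2-chain supported inside that region, the winding number around any point outside the region is unaffected. Since every jewel lies outside the flip region, all winding numbers are preserved and $\Phi$ is invariant under horizontal flips.

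The main step is invariance under vertical flips. A vertical flip converts either two stacked horizontal dominoes (whose projection is a trivial length-$2$ cycle contributing nothing to $\Phi$) into two adjacent jewels $j_1, j_2$, or performs the reverse; I treat the creation direction. The two new jewels occupy adjacent unit squares, so they have opposite colors $\epsilon$ and $-\epsilon$, and a cycle-crossing argument shows that $\win(j_1) = \win(j_2) = m$ for some integer $m$: no cycle outside the flipped region can separate two points one unit apart without passing through that region. If $j_1$ and $j_2$ share a column (adjacent in $y$), they are consecutive in the ordering and their contribution is $a_m^{\epsilon} a_m^{-\epsilon} = e$. If $j_1$ and $j_2$ lie in adjacent columns, intermediate jewels may separate them in the ordering, but every such intermediate shares a column with either $j_1$ or $j_2$; Lemma~\ref{lem:vertflip} then tells me its winding number pairs with $m$ in $R_L$, so the defining commutation relations of $G_L^+$ let me commute $a_m^{\epsilon}$ past every intermediate and cancel it against $a_m^{-\epsilon}$. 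This bookkeeping --- verifying that every intermediate letter genuinely commutes with $a_m^{\pm 1}$ --- is the main obstacle, and it is precisely where Lemma~\ref{lem:vertflip} becomes indispensable. Combining the three invariances then yields the desired homomorphism $\phi$.
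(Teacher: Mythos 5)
Your proposal is correct and follows essentially the same route as the paper: reduce everything to flip invariance (the concatenation and vertical-tiling checks you spell out are treated as immediate there), and handle vertical flips exactly as the paper does, via the equal winding numbers and opposite colors of the two new adjacent jewels, Lemma~\ref{lem:vertflip}, and the commutation relations defining $G_L^+$ to cancel the two contributions past any intermediate jewels. The only divergence is the horizontal-flip case, where your local homological argument (the two diagrams differ by the boundary of a $2$-chain supported in the $2\times 2$ flip region, which contains no jewel, so all winding numbers are preserved) replaces the paper's case analysis of cycles merging or splitting according to orientation; both arguments are valid.
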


\begin{proof}
It suffices to check that $\Phi$ is invariant under flips.
Let $\tl$ be an arbitrary tiling of $\D_L \times [0,2N]$.
Consider a horizontal flip performed in two dominoes $d_1$ and $d_2$.
The horizontal flip either connects two disjoint cycles or disconnects a cycle into two cycles.
Suppose the former, the other case is similar.
Therefore, $d_1$ and $d_2$ are contained in distinct cycles.
If either $d_1$ or $d_2$ is contained in a trivial cycle then it is easy to see that the horizontal flip does not change the winding number of any jewel.
Then, suppose that $d_1$ and $d_2$ are contained in nontrivial cycles, Figure~\ref{fig:flipcycle} below shows an example of the possible cases.
\begin{figure}[H] 
\centerline{
\includegraphics[width=0.55\textwidth]{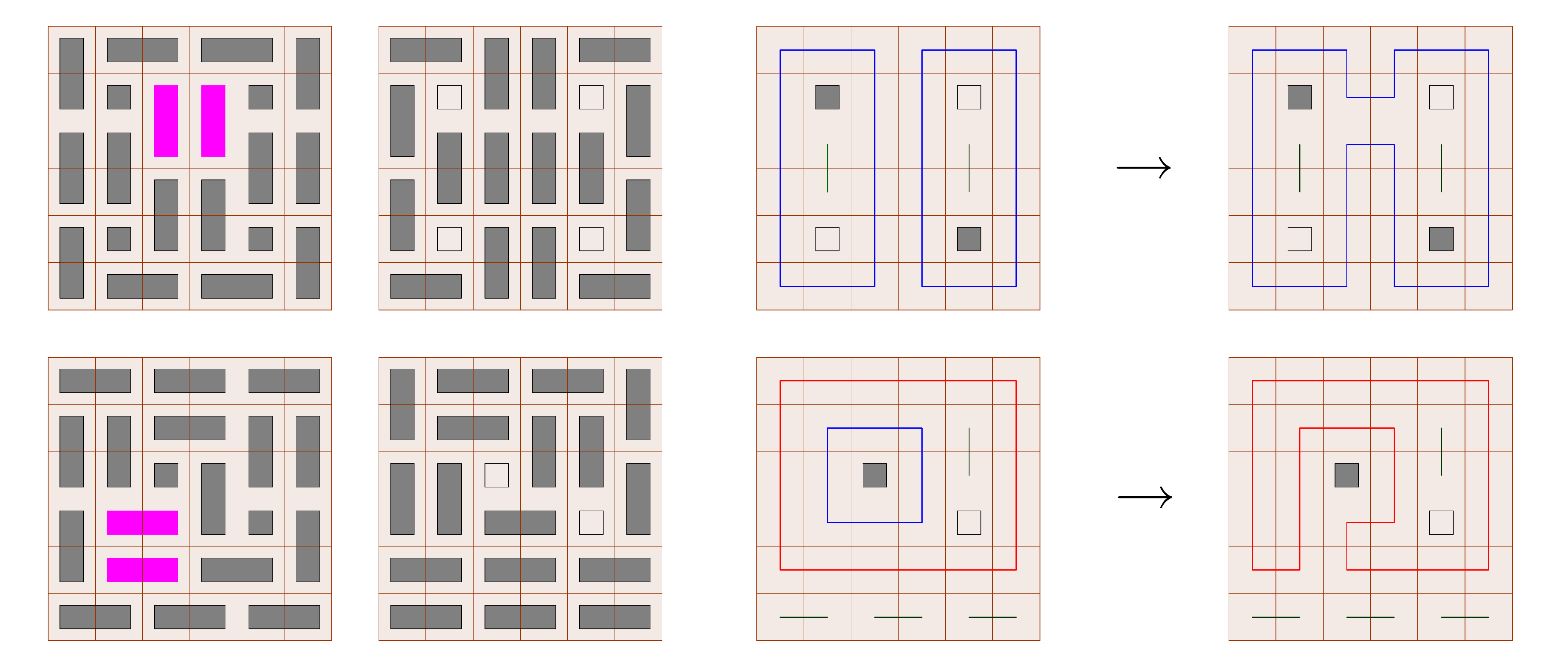}}
\caption{Two tilings and the effect of a horizontal flip (highlighted in magenta) on their diagrams.}
\label{fig:flipcycle}
\end{figure}
If $d_1$ and $d_2$ are contained in cycles having the same orientation then the flip connects the two cycles preserving the orientation.
If $d_1$ and $d_2$ are contained in cycles having opposite orientations then one cycle must be enclosed by the other.
The flip then creates a cycle with the same orientation as the outer cycle.
Moreover, the new cycle encloses only jewels enclosed by the outer cycle but not by the inner cycle.
Therefore, in any of the possible cases, the flip preserves the winding number of the jewels.
Then, $\Phi$ is invariant under horizontal flips.

Consider a vertical flip that takes a trivial cycle to two adjacent jewels (i.e., jewels whose corresponding unit squares are adjacent); a similar analysis holds for the reverse of this flip.
The flip creates adjacent jewels $j$ and $j'$ such that $j<j'$, $\win(j) = \win(j')$ and $\col(j) \neq \col(j')$.
If $j$ and $j'$ are in the same column then the flip clearly preserves the value of $\Phi(\tl)$.
Otherwise, the definition of $G_L^+$ and Lemma~\ref{lem:vertflip} imply that the contributions of jewels between $j$ and $j'$ commute so that $\Phi$ is invariant under vertical flips.
\end{proof}

Our objective is to prove that the homomorphism $\phi$ obtained above is an isomorphism.
To achieve this, we now study the even domino group $G_{\D_L}^+$.
We follow~\cite{MS15} to derive a family of generators of $G_{\D_L}^+$. 
A tiling $\tl$ of $\D_L \times [0,N]$ is called a \emph{boxed tiling} if its corresponding diagram $\mathcal{I}_{\tl}$ is composed of a nontrivial jewel $j$ and trivial jewels outside the square of center $j$ and side $2|\win(j)|+1$.

We prefer to work with boxed tilings due to some helpful properties.
Notably, we can move via flips the nontrivial jewel of a boxed tiling so that the resulting tiling is a boxed tiling as well. 
Specifically, consider two boxed tilings $\tl$ and $\tilde{\tl}$ of $\D_L \times [0,N]$.
Let $j$ and $\tilde{j}$ be the nontrivial jewels of $\mathcal{I}_{\tl}$ and $\mathcal{I}_{\tilde{\tl}}$, respectively.
If $\col(j)=\col(\tilde{j})$ and $\win(j) = \win(\tilde{j})$ then $\tl \approx \tilde{\tl}$.
For instance, Figure~\ref{fig:boxedjewelsflips} shows the process of moving a nontrivial jewel with winding number equals 1.
The extension to other cases follows inductively, by initially transforming the outer cycle into a rectangle through a sequence of flips.

\begin{figure}[H] 
\centerline{
\includegraphics[width=0.927\textwidth]{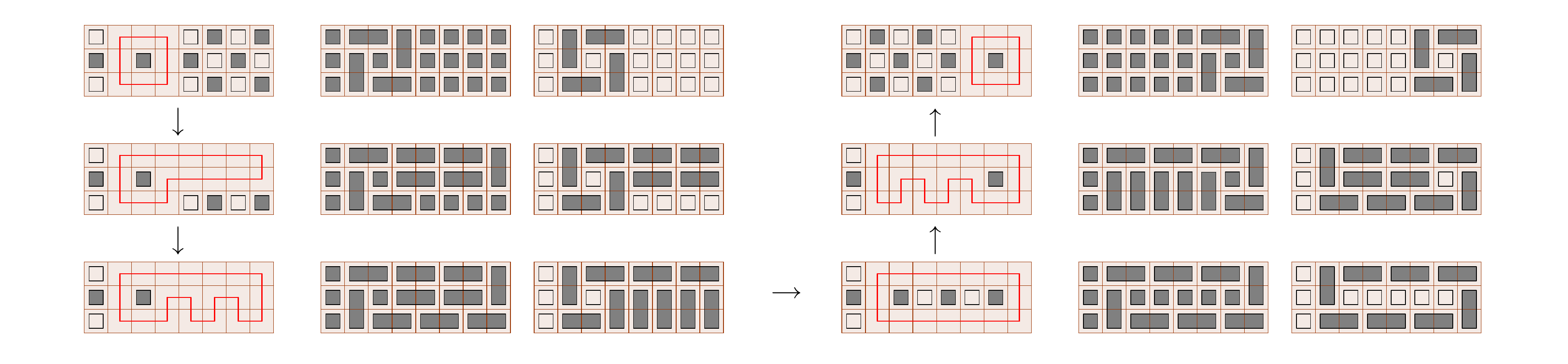}}
\caption{The process of moving a nontrivial jewel via a sequence of flips.}
\label{fig:boxedjewelsflips}
\end{figure}

The family of boxed tilings generates the even domino group $G_{\D_L}^+$.
Indeed, every tiling of a cylinder $\D_L \times [0,2N]$ is $\sim$-equivalent to a concatenation of boxed tilings.
\begin{lemma}\label{lem:sepjew}
Let $\tl$ be a tiling of $\D_L \times [0,2N]$.
Then, there exist $M \in \mathbb{N}$ and boxed tilings $\tl_1, \ldots, \tl_k$ of $\D_L \times [0,2M]$ such that $\tl \sim \tl_1*\ldots*\tl_k$.
\end{lemma}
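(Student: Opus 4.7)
The plan is to work in the duplex view of $\tl$ after the $90^\circ$ rotation and to use flips (combined with added vertical space, as permitted by $\sim$) to rearrange the diagram $\mathcal{I}_\tl$ into disjoint ``boxed'' pieces. I would argue by induction on the number $k$ of nontrivial jewels in $\mathcal{I}_\tl$, since each boxed tiling $\tl_i$ on the right hand side should correspond to exactly one such jewel.

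For the base case $k = 0$, the diagram $\mathcal{I}_\tl$ contains only trivial jewels and (possibly nontrivial) cycles. The claim is then that $\tl \sim \tl_{\text{vert},2M}$ for some $M$ (the empty concatenation). Each cycle bounds a region containing only trivial jewels, so by reversing the construction used in the proof of Lemma~\ref{lem:vertflip}, a sequence of horizontal flips can be performed to shrink any nontrivial cycle to a trivial cycle. Adjacent trivial cycles paired with trivial jewels are then absorbed via vertical flips, and iterating this procedure reduces $\tl$ to the vertical tiling (after adding enough vertical space to avoid collisions along the way).

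For the inductive step with $k \geq 1$, I would select the leftmost nontrivial jewel $j$ of $\mathcal{I}_\tl$ in the column order, and set $c = \col(j)$ and $w = \win(j)$. Using the freedom to append vertical tilings, I would first insert a wide buffer of trivial material to the right of the column of $j$. Then, by flips analogous to those depicted in Figure~\ref{fig:boxedjewelsflips} and the explicit constructions in Lemma~\ref{lem:vertflip}, I would pull $j$ and the minimal set of enclosing cycles (forming a square of side $2|w|+1$) into an isolated block $B$ at the leftmost portion of the diagram, so that all columns to the right of $B$ contain only trivial material. The block $B$ together with this trivial surrounding determines a boxed tiling $\tl_1$ whose nontrivial jewel has color $c$ and winding number $w$, while the remaining portion of the diagram corresponds to a tiling $\tl'$ with $k - 1$ nontrivial jewels. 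Applying the inductive hypothesis to $\tl'$ and padding each resulting boxed tiling with $\tl_{\textup{vert}}$ so that all have a common height $2M$ finishes the argument.

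The main obstacle is the inductive step, specifically the geometric work of isolating $j$ and its enclosing cycles from the rest of $\mathcal{I}_\tl$. The delicate case is when nontrivial cycles enclosing $j$ share columns with other nontrivial jewels or with cycles enclosing them; the nested/interleaved configurations of cycles must be pulled apart by flips without changing the winding numbers or colors of the other jewels. Lemma~\ref{lem:vertflip} (and the commutation relations it encodes) should justify the needed rearrangements, and adding sufficient vertical space gives the room to perform them, but producing an explicit and unambiguous flip sequence will require a careful case analysis.
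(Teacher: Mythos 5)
Your overall strategy (induct on the number of nontrivial jewels, isolate each one together with its enclosing cycles into a boxed block, and use $\sim$ to create vertical room) is the right shape of argument, but as written it has a genuine gap exactly at the step you flag yourself: the disentangling of a nontrivial jewel from cycles that also enclose, or interleave with, other nontrivial jewels. This separation is the entire content of the lemma, and the tools you invoke do not supply it. Lemma~\ref{lem:vertflip} is purely an existence statement — it characterizes which pairs $(m,n)$ of winding numbers can occur for two jewels in a common column — and it encodes no flip moves at all; the ``commutation relations'' you attribute to it are in fact Lemma~\ref{lem:comjew}, which is proved later, applies only to concatenations of tilings that are already boxed, and indeed uses the present lemma's conclusion as part of the overall structure of the section. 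Likewise, the jewel-moving procedure of Figure~\ref{fig:boxedjewelsflips} is stated only for boxed tilings, where everything outside one square is trivial; it does not by itself handle nested or interleaved nontrivial cycles surrounding distinct jewels. Saying that ``a careful case analysis'' will produce the required flip sequences is a statement of the problem, not a proof. The base case has a similar (smaller) issue: shrinking a nontrivial cycle by ``reversing the construction of Lemma~\ref{lem:vertflip}'' is not justified, since that construction builds diagrams rather than flip paths.

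For comparison, the paper does not carry out this case analysis either: it observes that the separation-into-boxed-pieces argument is exactly Lemma 7.4 of Milet--Saldanha \cite{Mil:duplex}, proved there for diagrams in $\mathbb{Z}^2$, and that the same proof applies in $[0,2N]\times[0,L]$ because the relation $\sim$ lets one pad with vertical tilings and so assume the strip is as long as needed. If you want a self-contained proof along your lines, the missing ingredient is precisely the machinery of that reference (how to pull apart nested and interleaved cycles by flips while preserving every jewel's color and winding number); otherwise the efficient fix is to cite it, as the paper does, and only verify that the bounded-strip setting causes no loss of generality.
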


\begin{proof}
This result is proved, in Lemma 7.4 of~\cite{MS15}, for diagrams in $\mathbb{Z}^2$ instead of $[0,2N]\times[0,L]$.
However, the same proof holds in our setting, since the relation $\sim$ allows us to assume that $N$ is arbitrarily large.
To avoid repetition, we do not provide further details.
\end{proof}

We now investigate relations between boxed tilings.
The lemma below shows that, under specific conditions, two boxed tilings commute with respect to concatenation.
Notice that the particular case, where the nontrivial jewels of the two boxed tilings share the same color and winding number, follows from the fact that we can move nontrivial jewels.

\begin{lemma}\label{lem:comjew}
Consider boxed tilings $\tl_1$ of $\D_L \times [0,2N_1]$ and $\tl_2$ of $\D_L \times [0,2N_2]$.
Let $j_1$ and $j_2$ be the nontrivial jewels in $\mathcal{I}_{\tl_1}$ and $\mathcal{I}_{\tl_2}$, respectively.
If $\col(j_1)=\col(j_2)$ and $(\win(j_1),\win(j_2)) \in R_L$ then $\tl_1 * \tl_2 \approx \tl_2* \tl_1$.
\end{lemma}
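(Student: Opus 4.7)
My plan is to show that both $\tl_1 * \tl_2$ and $\tl_2 * \tl_1$ can be transformed via flips into a single canonical tiling of $\D_L \times [0, 2N_1 + 2N_2]$ whose diagram places the two nontrivial jewels in a common column of $[0, 2N_1 + 2N_2] \times [0, L]$, modeled on the configurations produced in the proof of Lemma~\ref{lem:vertflip}.

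First, I would exploit the ``moving-jewel'' property stated just before Lemma~\ref{lem:sepjew}: within any boxed tiling, the nontrivial jewel may be relocated by a sequence of flips as long as its enclosing square of side $2|\win(j)|+1$ still fits in the diagram. Using this, I normalize $\tl_1$ so that $j_1$ is pushed against the right edge of $\mathcal{I}_{\tl_1}$ and $\tl_2$ so that $j_2$ is pushed against the left edge of $\mathcal{I}_{\tl_2}$; then in $\mathcal{I}_{\tl_1 * \tl_2}$ the two enclosing boxes are adjacent, separated only by a strip of trivial cycles and trivial jewels. The symmetric normalization applied to $\tl_2 * \tl_1$ yields the mirror adjacent configuration with $j_2$ on the left.

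Next, with the two boxes adjacent, I would perform flips ``across the seam'' to bring the two nontrivial jewels into the same column, following the geometric recipes of Lemma~\ref{lem:vertflip}. In the easier case $|m|+|n| < \lfloor L/2 \rfloor$ the two enclosing squares are disjoint and can be realigned into a common column by first straightening their outer cycles into rectangles (as in Figure~\ref{fig:boxedjewelsflips}) and then rearranging the surrounding trivial cycles with horizontal flips. In the harder case $|m|+|n| \geq \lfloor L/2 \rfloor$, the hypothesis $(m,n) \in R_L$ forces $\sign(m) = \sign(n)$, so the two outer cycles must be merged into a single outer cycle of side $2|m|+2$ enclosing a nested inner cycle of side $2|n|+1$, producing the nested configuration from the second paragraph of Lemma~\ref{lem:vertflip}'s proof. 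The assumption $\col(j_1) = \col(j_2)$ is precisely the parity constraint that allows both jewels to land on same-color unit squares of the common column.

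Finally, the canonical tiling thus produced depends only on the unordered data $\{(\col(j_1),m),(\col(j_2),n)\}$ and on the chosen column, not on which of $\tl_1, \tl_2$ appeared first. Running the mirror flip sequence starting from $\tl_2 * \tl_1$ therefore reaches exactly the same canonical tiling, which gives $\tl_1 * \tl_2 \approx \tl_2 * \tl_1$. I expect the main obstacle to be the nested case: one must verify that every intermediate step of the cycle-merge decomposes into legitimate horizontal and vertical flips without ever pushing a cycle outside the strip $[0,L]$. This is essentially the strip-bounded analogue of the commutation arguments developed in~\cite{Mil:duplex} for duplex regions on $\mathbb{Z}^2$, where the bounded height is precisely what gives rise to the commutation relations indexed by $R_L$.
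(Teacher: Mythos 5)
Your strategy of flipping both $\tl_1 * \tl_2$ and $\tl_2 * \tl_1$ to a common canonical tiling with the two nontrivial jewels in a single column has a genuine gap: such a configuration need not exist. If two jewels with windings $m,n$ lie in the same column, every cycle enclosing $j_1$ crosses the column above $j_1$, every cycle enclosing $j_2$ crosses it below $j_2$, and every cycle enclosing exactly one of them crosses it strictly between them; requiring moreover $\col(j_1)=\col(j_2)$ forces the number of squares strictly between the jewels to be \emph{odd}. For $L=10$ and $(m,n)=(4,2)$ (which lies in $R_{10}$, with distinct windings, so you cannot fall back on $\tl_1 \approx \tl_2$) this gives at least $4+2+3+2=11>L$ squares in one column, so no same-column, same-color realization exists. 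In particular your claim that $\col(j_1)=\col(j_2)$ ``is precisely the parity constraint that allows both jewels to land on same-color unit squares of the common column'' is false, and your proposed target in the hard case --- the tight nested configuration from the proof of Lemma~\ref{lem:vertflip} --- places the jewels at vertical distance $|m|-|n|+1$, hence with \emph{opposite} colors whenever $|m|-|n|$ is even; by the invariant of Lemma~\ref{lem:homthinrect} that tiling is then not flip-equivalent to $\tl_1*\tl_2$ at all. So the approach breaks exactly where you anticipated the ``main obstacle,'' and not merely for technical reasons about fitting cycles in the strip.

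The paper avoids any canonical form. In the case $|m|+|n|<\lfloor L/2\rfloor$ it slides the two jewels past each other directly, using the moving-jewel procedure in two disjoint horizontal substrips of heights $2|n|+1$ and $L-(2|n|+1)$, turning $\tl_1*\tl_2$ into $\tl_2*\tl_1$ (with a separate, trivial treatment of $(m,n)=(\pm1,\pm1)$ when $L=4,5$, where the jewels have equal color and winding so $\tl_1\approx\tl_2$). In the case $|m|+|n|\ge\lfloor L/2\rfloor$, where $\sign(m)=\sign(n)$, it argues by induction on $L$: enlarge the outermost cycles of the two blocks to boundary-hugging rectangles, merge them by one flip into a single cycle enclosing $[1,2N_1+2N_2-1]\times[1,L-1]$, note that the residual windings satisfy $(m-\sign(m),\,n-\sign(n))\in R_{L-2}$, commute the jewels inside the narrower strip by the induction hypothesis, and undo the outer flips. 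That strip-shrinking induction is the key idea missing from your proposal, and it is what handles precisely the configurations where your same-column canonical tiling cannot exist.
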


\begin{proof}
Let $(m,n)=(\win(j_1),\win(j_2))$, we focus on the diagram $\mathcal{I}_{\tl_1 * \tl_2}$.
We consider two cases: $|m|+|n| < \lfloor \frac{L}{2} \rfloor$ and $|m|+|n| \geq \lfloor \frac{L}{2} \rfloor$.
First suppose the former.
This case is a matter of moving the nontrivial jewels (as in Figure~\ref{fig:boxedjewelsflips}), we proceed in three steps.
Initially, move $j_1$ to a jewel $\widetilde{j_1}$ in $[0,2N_1] \times [2|n|+1, L]$ and $j_2$ to a jewel $\widetilde{j_2}$ in $[2N_1,2(N_1+N_2)] \times [0,2|n|+1]$.
Since $L \geq 2(|m|+ |n| +1)$, we can then move $\widetilde{j_1}$ to a jewel $\overbar{j_1}$ in $[2N_2,2(N_1+N_2)] \times [2|n|+1, L]$ and $\widetilde{j_2}$ to a jewel $\overbar{j_2}$ in $[0,2N_2] \times [0,2|n|+1]$.
Finally, move $\overbar{j_1}$ (resp. $\overbar{j_2}$) in $[2N_2,2(N_1+N_2)] \times [0,L]$ (resp. $[0,2N_2] \times [0,L]$) to obtain copies of $\tl_1$ and $\tl_2$.
For instance, Figure~\ref{fig:comtil} shows a particular case ($L=6$ and $(m,n) = (1,-1)$) of the general idea.

\begin{figure}[H] 
\centerline{
\includegraphics[width=0.68\textwidth]{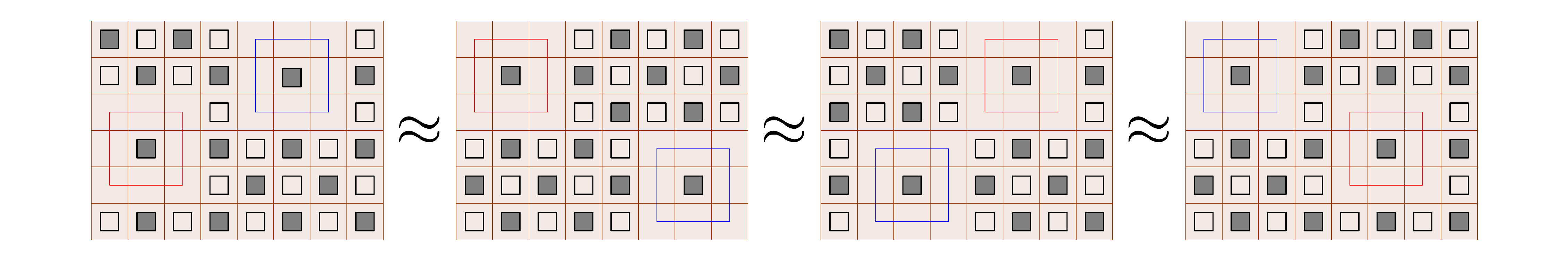}}
\caption{The diagram $\mathcal{I}_{\tl_1*\tl_2}$ and the effect of three sequences of flips.}
\label{fig:comtil}
\end{figure}

We have $R_3= \{ (0,0)\}$ and $R_4 = R_5= \{(0,0),(\pm 1,0), (0, \pm 1), (1,1), (-1,-1) \}$.
The previous paragraph cover all cases except $(m,n) \in \{(1,1), (-1,-1) \}$ for $L=4$ and $L=5$.
However, in any of these two cases, the nontrivial jewels share the same color and winding number, so that $\tl_1 \approx \tl_2$.
Thus, the result holds for $L=3,4,5$.

If $|m|+|n| \geq \lfloor \frac{L}{2} \rfloor$ then $\sign(m)=\sign(n)$.
The proof follows by induction on~$L$.
Perform a sequence of flips that takes the largest cycle which encloses $j_1$ to the cycle $\gamma_1$ which encloses the region $[1,2N_1-1] \times [1,L-1]$.
Similarly, enlarge the largest cycle which encloses $j_2$ to obtain the cycle $\gamma_2$ which encloses the region $[2N_1+1,2N_1+2N_2-1] \times [1,L-1]$.
Since $\sign(m)=\sign(n)$ there exists a flip that connects $\gamma_1$ and $\gamma_2$ into a cycle $\gamma$; as before, enlarge $\gamma$ to obtain a cycle which encloses the region $[1,2N_1+2N_2-1] \times [1,L-1]$.

Now, the winding number of $j_1$ and $j_2$, restricted to $[1,2N_1+2N_2-1] \times [1,L-1]$, is equal to $m-\sign(m)$ and $n-\sign(n)$, respectively.
Notice that $(m-\sign(m), n-\sign(n)) \in R_{L-2}$.
Then, by the induction hypothesis, there exists a sequence of flips which commutes the nontrivial jewels in $[1,2N_1+2N_2-1] \times [1,L-1]$.
Finally, undo the flips of the previous paragraph to conclude that $\tl_1 *\tl_2 \approx \tl_2*\tl_1$. 
\end{proof}

\begin{proof}[Proof of Theorem~\ref{thm:thinrectangles}]
We prove that the homomorphism $\phi \colon G_{\D_L}^+ \to G_L^+$ of Lemma~\ref{lem:homthinrect} is an isomorphism.
Indeed, we obtain a map $\psi: G_L^+ \to G_{\D_L}^+$ such that $\psi^{-1} = \phi$.
To this end, we first define a homomorphism $\Psi: F(S_L) \to G_{\D_L}^+$ from the free group generated by $S_L$ to $G_{\D_L}^+$.

Consider a  nonzero integer $|i| \leq \genfrac{\lfloor}{\rfloor}{0.5pt}{1}{L-1}{2}$.
Let $\tl_i$ be the boxed tiling, of the cylinder $\D_L \times [0,2(|i|+1)]$, such that the nontrivial jewel $j_i$ in $\mathcal{I}_{\tl_i}$ is the center of the square $[0,2|i|+1]^2$ and $\win(j_i)=i$.
Let $\Psi$ be the homomorphism such that $\Psi(a_i) = \tl_i$.
Then, it follows from Lemma~\ref{lem:comjew} that  $\Psi(a_ma_na_m^{-1}a_n^{-1}) = e$ for $(m,n) \in R_L$.
Thus, $\Psi$ induces a homomorphism $\psi \colon G_L^+ \to G_{\D_L}^+$.

By definition $\phi(\tl_i)= a_i$, so that $\phi \circ \psi$ equals the identity map.
Now, consider an arbitrary tiling $\tl$ of $\D_L \times [0,2N]$.
By Lemma~\ref{lem:sepjew}, $\tl$ is $\sim$-equivalent to a concatenation of boxed tilings.
Moreover, we know that two boxed tilings whose nontrivial jewels share the same color and winding number are also $\sim$-equivalent.
Thus, there exists $i_1,i_2, \ldots, i_k \in \mathbb{Z}$ such that $\tl \sim \tl_{i_1}^{\epsilon_1} * \tl_{i_2}^{\epsilon_2} * \ldots * \tl_{i_k}^{\epsilon_k}$ for some $\epsilon_1, \epsilon_2, \ldots, \epsilon_k = \pm 1$.
Then, $\phi(\tl) = \phi(\tl_{i_1}^{\epsilon_1} * \tl_{i_2}^{\epsilon_2} * \ldots * \tl_{i_k}^{\epsilon_k})=a_{i_1}^{\epsilon_1}a_{i_2}^{\epsilon_2} \ldots a_{i_k}^{\epsilon_k}$, and therefore $\psi \circ \phi (\tl)= \tl$.
\end{proof}

\begin{remark}\label{rem:semidirectprod}
We are able to compute the domino group $G_{\D_L}$ once we compute the even domino group $G_{\D_L}^+$.
Indeed, consider a tiling of $\D_L \times [0,1]$ so that it defines an element of order 2 in $G_{\D_L}$ which generates the subgroup $H$.
Notice that every element in $G_{\D_L}$ is a product of an element in $G_{\D_L}^+$ and an element in $H$.
Thus, the domino group is isomorphic to the inner semidirect product of $G_{\D_L}^+$ and $H$.
More specifically, let $\psi \colon \mathbb{Z} / (2) \to \text{Aut}(G_L^+)$ be the homomorphism defined by $\psi(1)(a_i)=a_{-i}^{-1}$ for each $a_i \in S_L$.
Therefore, the semidirect product $\mathbb{Z}/(2) \ltimes_{\psi} G_L^+$ is isomorphic to the domino group $G_{\D_L}$. \hfill $\diamond$
\end{remark}

\section{Strongly irregular disks}\label{sec:proofthms123}

In this section, we provide a detailed study of strongly irregular disks, including the proof of Theorem~\ref{thm:irregdisk-squrdisc}.
We also present Theorems~\ref{thm:irregdisk-dominodisc} and~\ref{thm:dominodisc2}, which demonstrate the strong irregularity of certain disks that can be disconnected by removing a domino, such as the disks shown in Figures~\ref{fig:irregdisk-dominodisc} and~\ref{fig:examthm2dominodisc}.
The proofs of these theorems closely follow the proof of Theorem~\ref{thm:irregdisk-squrdisc} and are included to ensure the completeness of the text.
Notice that the hypotheses of Theorems~\ref{thm:irregdisk-squrdisc}, \ref{thm:irregdisk-dominodisc} and \ref{thm:dominodisc2} are not mutually exclusive.
For instance, both first and third disks in Figure~\ref{fig:irregdisk-dominodisc} are examples where all three theorems apply.
However, in such cases, each theorem provides a distinct surjective homomorphism.  

\begin{theorem}\label{thm:irregdisk-dominodisc}
Consider a balanced quadriculated disk $\D$ containing a domino $d$ such that $\D \smallsetminus d$ is not connected.
Suppose that there exists a $2 \times 2$ square in $\D$ which contains $d$.
If every connected component of $\D \smallsetminus d$ which intersects a $2 \times 2$ square contained in $\D$ that contains $d$ has size at most $\frac{|\D|-2}{2}$ then $\D$ is strongly irregular.
\end{theorem}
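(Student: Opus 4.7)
Following the general strategy announced in the paper, the proof consists of building a surjective homomorphism $\Phi \colon G_{\D}^+ \twoheadrightarrow F_2$; the strong irregularity of $\D$ then follows by definition.

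The first step is combinatorial. Fix the disconnecting domino $d$ and a $2 \times 2$ square $Q \subset \D$ containing $d$; the two unit squares of $Q \smallsetminus d$ are edge-adjacent, hence lie in the same component of $\D \smallsetminus d$. Call a component \emph{adjacent} if it meets a $2 \times 2$ square of $\D$ containing $d$. Since $\D$ is a simply connected quadriculated disk, $d$ separates it into pieces on opposite sides, and the hypothesis bounds every adjacent component by $(|\D|-2)/2$. A short case analysis --- according to whether one or two $2 \times 2$ squares contain $d$, and whether the adjacent components exhaust $\D \smallsetminus d$ or leave room for a remote one --- produces two distinct components $K_1, K_2$ of $\D \smallsetminus d$ with the property that no $2 \times 2$ square of $\D$ containing $d$ meets both $K_1$ and $K_2$ simultaneously.

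The second step mimics the construction used to establish Theorem~\ref{thm:irregdisk-squrdisc}, with the $2 \times 2$ square $Q$ playing the role that the unit square $s$ played there. Using the CW-complex description of $G_{\D}^+$ recalled in Section~\ref{sec:definitions}, I pick loops $\gamma_1, \gamma_2$ in $\mathcal{C}_{\D}$ supported in $K_1$ and $K_2$ respectively, and define $\Phi$ on a generating $1$-cell by a signed count of its interactions with $\gamma_1$ and $\gamma_2$, producing a word in the free generators of $F_2 = \langle x, y\rangle$. By design $\Phi(\gamma_1) = x$ and $\Phi(\gamma_2) = y$, so surjectivity is automatic once well-definedness is verified.

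The main obstacle is precisely that verification. Flip relations disjoint from $d$ are supported inside a single component of $\D \smallsetminus d$ and therefore preserve the label coming from the other component. Flip relations using $d$ occur inside a $2 \times 2$ square of $\D$ containing $d$; since any such square meets at most one of $K_1, K_2$, it alters only one of the two labels, and the relation is respected in $F_2$. The crucial role of the size hypothesis is to forbid any $2 \times 2$ square containing $d$ from meeting both $K_1$ and $K_2$: otherwise a flip relation using $d$ would act simultaneously on both coordinates and force $[x,y] = 1$, collapsing the image of $\Phi$ to an abelian group and ruining the argument. Once this invariance is checked relation by relation, the existence of $\Phi$ is established and strong irregularity follows.
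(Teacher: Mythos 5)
There is a genuine gap, and it is centered on the size hypothesis. You claim its ``crucial role'' is to prevent a $2\times 2$ square containing $d$ from meeting two chosen components $K_1,K_2$ simultaneously; but that separation is automatic and has nothing to do with sizes: the two unit squares of any such $2\times 2$ square other than $d$ are adjacent, so each such square meets exactly one component of $\D\smallsetminus d$. Consequently your argument never uses the bound $\frac{|\D|-2}{2}$ in any essential way, and a proof with that feature cannot be correct: the paper's remark after the theorems exhibits a disk (the union of $[0,a]^2$ and a thin $2\times\frac{a^2-4}{2}$ tail) which has a disconnecting domino $d$ contained in a $2\times2$ square and which is \emph{regular}, so its even domino group is $\mathbb{Z}$ and admits no surjection onto $F_2$. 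The actual role of the size hypothesis in the paper is entirely different: the homomorphism is defined on floors with parity by membership in explicit classes $\mathcal{F}_0,\mathcal{F}_1$ of floors whose reduced floor contains $d$ and whose plugs mark all black (resp.\ white) unit squares of the component(s) of $\D\smallsetminus d$ meeting a $2\times2$ square containing $d$; since plugs are balanced, such plugs exist only because $|\D\smallsetminus d|-|\D_0|\geq|\D_0|$ (one-square case) or $|\D\smallsetminus d|-|\D_1|-|\D_2|\geq|\D_2|-|\D_1|$ (two-square case), which is exactly what the size bound provides. Your construction has no analogue of this nonemptiness step.

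Beyond that, the definition of $\Phi$ by ``a signed count of interactions'' of a $1$-cell with loops $\gamma_1,\gamma_2$ supported in $K_1,K_2$ is too vague to check against the $2$-cells of $\mathcal{C}_\D^+$ (flip bigons, vertical-flip quadrilaterals, and the projective planes attached to loops): a signed count is intrinsically abelian and would at best land in $\mathbb{Z}\oplus\mathbb{Z}$, whereas the whole point is to retain order information so that the images of the two distinguished floor classes do not commute; the paper achieves this through the parity bookkeeping in $\mathcal{C}_\D^+$ together with the observation that any two consecutive floors both containing $d$ map to the identity. Finally, surjectivity is not ``automatic'': one must produce genuine closed paths based at the empty plug hitting the distinguished classes exactly once, which the paper does with the spanning-tree algorithm for $\tl_p$, using a spanning tree of $\D$ avoiding the edge $d$ (possible precisely because $d$ sits inside a $2\times2$ square); loops ``supported in $K_1$'' or ``$K_2$'' would not contain the domino $d$ at all and so would not map to the free generators under any floor-based definition.
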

\begin{figure}[H]
\centerline{
\includegraphics[width=0.61\textwidth]{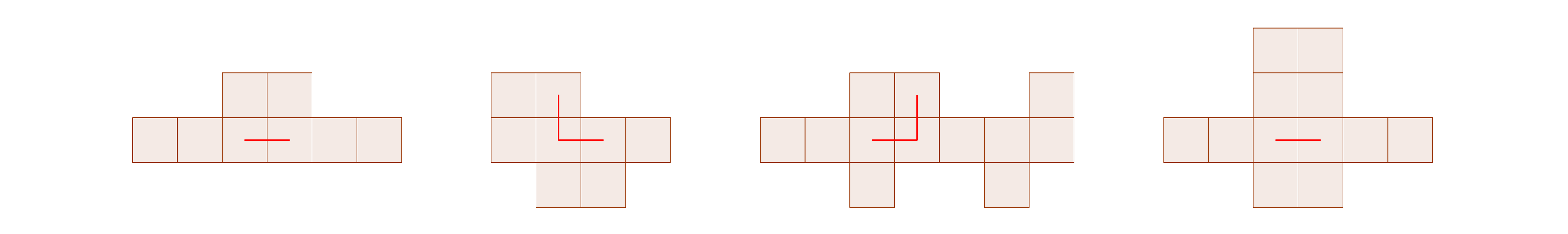}}
\caption{Examples of strongly irregular disks; dominoes $d$ as in Theorem~\ref{thm:irregdisk-dominodisc} are marked by a red line segment.}
\label{fig:irregdisk-dominodisc}
\end{figure}

\begin{theorem}\label{thm:dominodisc2} Consider a balanced quadriculated disk $\mathcal{D}$.
Suppose there exists a $2 \times 2$ square $s \subset \mathcal{D}$ such that $\mathcal{D} \smallsetminus s$ is the union of two disjoint disks $\mathcal{D}_1$ and $\mathcal{D}_2$ with $| \mathcal{D}_1|=|\mathcal{D}_2|$.
Suppose $s$ contains dominoes $d_1$ adjacent to $\D_1$ and $d_2$ adjacent to $\D_2$ such that $\D \smallsetminus d_1$ and $\D \smallsetminus d_2$ are not connected.
Then, $\D$ is strongly irregular.
\end{theorem}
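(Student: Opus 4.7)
The plan is to follow the general strategy of Theorems~\ref{thm:irregdisk-squrdisc} and~\ref{thm:irregdisk-dominodisc}: construct a surjective homomorphism $\Phi \colon G_{\D}^+ \to F_2$ and then invoke the definition of strong irregularity. The hypothesis provides a very symmetric bottleneck: the $2\times 2$ square $s$ separates $\D$ into two equal halves $\D_1, \D_2$, and each of $d_1, d_2 \subset s$ is individually a disconnecting domino, with $d_i$ adjacent to $\D_i$. This symmetry naturally suggests using $d_1$ and $d_2$ as the sources of the two free generators of $F_2$.

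Concretely, $G_\D$ is the fundamental group of the complex $\mathcal{C}_\D$, whose vertices are floor tilings of $\D$ and whose edges correspond to tilings of $\D \times [0,1]$. After fixing a convenient base floor tiling $t_0$, I would define, for each closed edge-loop $\gamma$ in $\mathcal{C}_\D$ based at $t_0$, the image $\Phi(\gamma) \in F_2 = \langle x, y\rangle$ to be the reduced word obtained by scanning $\gamma$ in order and recording $x^{\pm 1}$ whenever an edge corresponds to a layer tiling forcing a passage through $d_1$ (signed by the direction of passage) and $y^{\pm 1}$ for $d_2$; edges using neither $d_i$ contribute the identity. Because each $d_i$ disconnects $\D$, every such passage is a genuinely global event, and the balance $|\D_1| = |\D_2|$ keeps the signs consistent across the two halves. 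Surjectivity would follow by exhibiting two explicit closed loops in $\mathcal{C}_\D$, one using only $d_1$ (hence mapped to $x$) and the other using only $d_2$ (mapped to $y$); their existence is guaranteed by the fact that both dominoes sit inside the common $2 \times 2$ square $s$, which can be completed locally into tilings of $\D \times [0,1]$.

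The core technical step is verifying that $\Phi$ vanishes on the 2-cell relations of $\mathcal{C}_\D$, i.e.\ on flip squares. Flips supported entirely in $\D_1$ or in $\D_2$ contribute the trivial word since they do not affect the use of $d_1, d_2$. The delicate cases are flips inside or straddling $s$: here the placement of both $d_1$ and $d_2$ inside a common $2 \times 2$ square, together with the size equality $|\D_1| = |\D_2|$, should force the boundary word of each such flip square to reduce to the identity in $F_2$, which I would verify by a finite enumeration of the possible local configurations near $s$.

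The hard part, I expect, is not these local checks but the non-commutative bookkeeping: one must ensure that the cyclic order in which $x$'s and $y$'s appear along $\gamma$ is genuinely an invariant of $[\gamma] \in G_\D^+$ and does not collapse to an abelian quantity under flips that sit far from $s$ yet whose boundary loops happen to pass through the gate several times. The two-gate structure (as opposed to the single-gate setup of Theorem~\ref{thm:irregdisk-dominodisc}) is precisely what should produce the non-commutativity, while the balanced-halves hypothesis $|\D_1| = |\D_2|$ is what should make the counting of $d_1$- and $d_2$-passages close up consistently. If the direct coloring becomes unwieldy, a fallback is to first construct a homomorphism from $G_\D^+$ onto an amalgamated product built from $G_{\D_1}^+$ and $G_{\D_2}^+$ over the structure of $s$, and then project onto $F_2$ by killing everything except the loops associated with the two disconnecting dominoes.
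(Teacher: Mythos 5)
Your high-level strategy coincides with the paper's (define $\phi$ on oriented edges of $\mathcal{C}_\D^+$, check that boundaries of 2-cells die, then prove surjectivity), but the specific assignment you propose does not give a well-defined homomorphism, and the step you defer to ``a finite enumeration of local configurations near $s$'' is exactly where it breaks. If you record a letter for \emph{every} floor whose reduced tiling contains $d_1$ (resp.\ $d_2$), the bigon 2-cells of $\mathcal{C}_\D$ kill you: a single horizontal flip inside one floor can destroy the occurrence of $d_1$ --- either by flipping $d_1$ with the parallel domino of $\D_1$ adjacent to it (when $\D_1$ is not blocked by plugs), or by flipping the pair $\{d_1, s\smallsetminus d_1\}$ into the pair $\{d_2, s\smallsetminus d_2\}$ inside $s$. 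The two edges of such a bigon then receive $x^{\pm1}$ and $e$ (or $x^{\pm1}$ and $y^{\pm1}$), so the attached 2-cell does not map to the identity; no amount of sign bookkeeping or local case-checking repairs this while keeping your definition, because the defect sits on a single edge. (The vertical-flip quadrilaterals also force you to work with floors \emph{with parity} in $\mathcal{C}_\D^+$ so that two consecutive occurrences of $d_1$ cancel; your ``direction of passage'' is too vague to guarantee this.)

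The paper's construction differs precisely at this point: a floor carries a nontrivial letter only if, in addition to containing $d_1$ (resp.\ $d_2$), its plugs mark \emph{all} black squares of $\D_1$ in $p_0$ and all white squares of $\D_1$ in $p_1$ (resp.\ the same for $\D_2$), giving four classes $\mathcal{F}_0,\mathcal{F}_1,\mathcal{F}_2,\mathcal{F}_3$. This is where $|\D_1|=|\D_2|$ and balancedness enter: they force $\#\text{white}(\D_1)=\#\text{black}(\D_2)$ and $\#\text{black}(\D_1)=\#\text{white}(\D_2)$, which pins down the plug structure of the critical floors, excludes flips of $d_1$ against dominoes of $\D_1$, and makes the one remaining dangerous flip (inside $s$) exchange a floor of class $\mathcal{F}_0$ with one of class $\mathcal{F}_3$. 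The letters are then assigned by explicit tables indexed by (class, parity) --- \emph{different} tables according to whether $d_1$ and $d_2$ are parallel, with floors lying in two classes (which occur exactly in the parallel case) sent to $e$ --- chosen so that all these boundary words cancel. Finally, surjectivity is not obtained by ``locally completing $s$'': it uses the spanning-tree algorithm producing tilings $\tl_p$ all of whose floors avoid the four classes, so that the constructed loops hit a lettered floor exactly once. Your fallback via an amalgamated product of $G_{\D_1}^+$ and $G_{\D_2}^+$ is not developed in the paper and would itself require substantial justification.
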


\begin{figure}[H]
\centerline{
\includegraphics[width=0.41\textwidth]{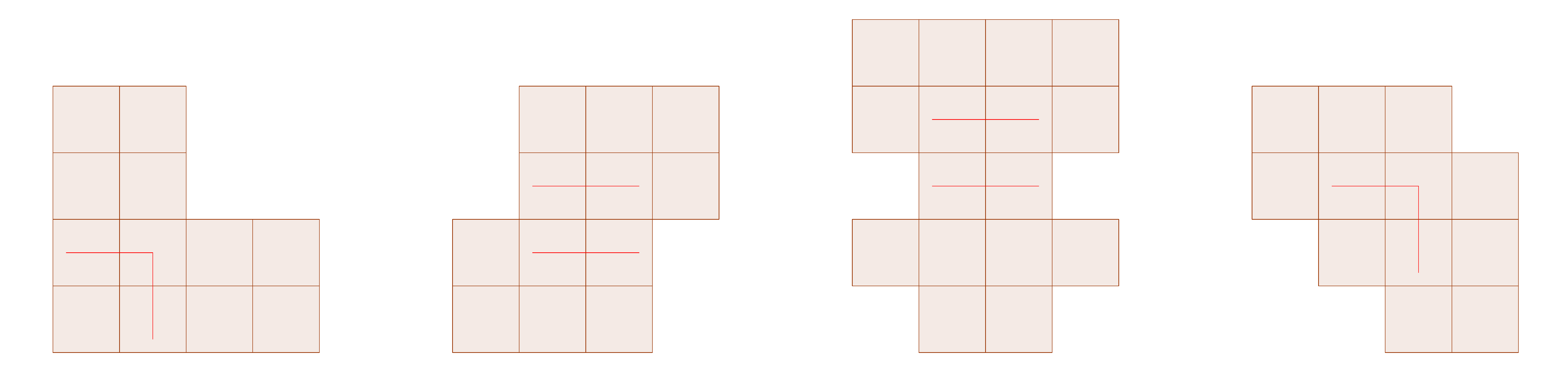}}
\caption{Examples of strongly irregular disks; dominoes $d_1$ and $d_2$ as in Theorem~\ref{thm:dominodisc2} are marked by a red line segment.}
\label{fig:examthm2dominodisc}
\end{figure}

\begin{remark}
In Theorem~\ref{thm:irregdisk-dominodisc}, the hypothesis about the size of the connected components cannot be discarded.
Indeed, consider an even number $a \geq 4$ and let $b=a^2-4$.
Then, it follows from~\cite{Mar21}, that the disk $\D$ formed by the union of $[0,a]^2$ and $[a,a+\frac{b}{2}] \times [0,2]$ is regular.
For instance, Figure~\ref{fig:regcounterexample} shows $\D$ for $a=4$.
On the other hand, notice that if $d=[a-1,a] \times [0,2]$ then $\D \smallsetminus d$ is the union of two disjoint disks $\D_1$ and $\D_2$ such that $|\D_1|=a^2-2$ and $|\D_2|=b=a^2-4$. \hfill $\diamond$
\end{remark}

In order to prove the strongly irregularity of a disk, we construct a surjective homomorphism from its even domino group to the free group $F_2$.
The homomorphisms presented in this section are all constructed based on the same idea.
Given a disk $\D$, we first construct a map $\Phi$ that takes oriented edges (floors with parity) in $\mathcal{C}_\D^+$ to $F_2$.
Thus, $\Phi$ defines a homomorphism from the free group on oriented edges to $F_2$.
We then verify that $\Phi$ maps the boundary of any 2-cell to the identity.
Consequently, we obtain a homomorphism $\phi \colon G_\D^+ \to F_2$ by taking the quotient of the free group on oriented edges by the relations defining $G_{\D}^+$.
In order to fix the ideas, consider Example~\ref{exm:diskf2}, which exhibits the only disk that we know of whose strong irregularity does not follow from Theorems~\ref{thm:irregdisk-squrdisc}, \ref{thm:irregdisk-dominodisc} and~\ref{thm:dominodisc2}.

\begin{example}\label{exm:diskf2}
Let $\D$ be the first disk in the bottom row of Figure~\ref{fig:nonreg}, i.e., the quadriculated disk formed by the union of the rectangle $[0,4] \times [0,1]$ and the two unit squares $[1,2] \times [-1,0]$ and $[2,3] \times [1,2]$.
We construct a surjective homomorphism $\phi \colon G_{\D}^+ \to F_2$.

Let $d=[1,3] \times [0,1]$ be a domino.
Consider the plugs $p_0 = [0,1]^2 \cup ([2,3] \times [1,2])$ and $p_1 = ([1,2]\times[-1,0]) \cup ([3,4]\times [0,1])$.
Then, $f=(p_0, \{d\}, p_1)$ is an oriented edge of the complex $\mathcal{C}_\D$.
This edge defines four oriented edges in $\mathcal{C}_{\D}^+$, i.e., floors with parity: $\mathbf{f_0} = (f,0)$, $\mathbf{f_1} = (f^{-1},0)$, $\mathbf{f_0}^{-1} = (f^{-1},1)$ and $\mathbf{f_1}^{-1} = (f,1)$.

We now define $\Phi$ for oriented edges in $\mathcal{C}_{\D}^+$.
Set $\Phi(\mathbf{f_0})=a$, $\Phi(\mathbf{f_1}) =b$, $\Phi(\mathbf{f_0}^{-1}) =a^{-1}$ and $\Phi(\mathbf{f_1}^{-1}) =b^{-1}$; all other edges are mapped to the identity.
Notice that $\Phi$ takes the boundary of any 2-cell in $\mathcal{C}_{\D}^+$ to the identity, as $\mathbf{f_i}$ (for $i=0,1$) do not permit horizontal flips and the possible vertical flip is obtained by moving consecutively along $\mathbf{f_i}$ and $\mathbf{f_i}^{-1}$.
Thus, $\Phi$ extends to a homomorphism $\phi \colon G_{\D}^+ \to F_2$.
The tilings $\tl$ and $\tilde{\tl}$ shown in Figure~\ref{fig:generatorsf2} are such that $\phi(\tl)=a$ and $\phi(\tilde{\tl})=b$, so that $\phi$ is surjective. 
By following the ideas of Example~\ref{exm:diskz2}, one can further show that $\tl$ and $\tilde{\tl}$ generate $G_\D^+$, implying that $\phi$ is in fact an isomorphism.$\hfill \diamond$
\end{example}

\begin{figure}[H]
\centerline{
\includegraphics[width=1.61\textwidth]{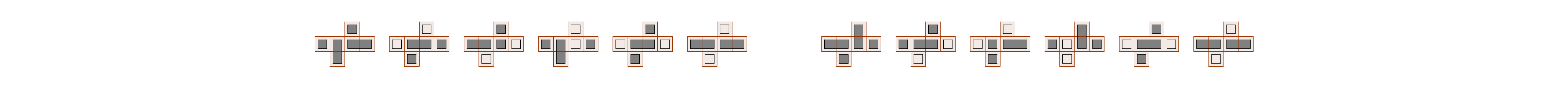}}
\caption{Tilings $\tl$ and $\tilde{\tl}$ of $\D \times [0,6]$.
The map $\Phi$ takes the second floor of $\tl$ and $\tilde{\tl}$ to $a$ and $b$, respectively.}
\label{fig:generatorsf2}
\end{figure}

In general, the proof of the surjectivity of $\phi$ is based on an algorithm to construct, for each plug $p \in \mathcal{P}$, a tiling $\tl_p$ of $\R_{0,|p|;p, \pl_{\circ}}$.
We now describe this algorithm (see Lemma 4.2 of \cite{Sal22}).
We first identify a disk $\D$ with a bipartite graph $\mathcal{G}(\D)$ whose vertices are the white and black colored unit squares in $\D$; two vertices are connected by an edge if and only if their corresponding unit squares are adjacent.
We refer to a spanning tree of $\mathcal{G}(\D)$ as a spanning tree of $\D$.
In that sense, given a disk $\D$ with a spanning tree $T$, we say that a domino in $\D$ is not an edge of $T$ if its corresponding edge in $\mathcal{G}(\D)$ is not contained in $T$.

Let $\D$ be a disk with a spanning tree $T$ and a plug $p \in \mathcal{P}$.
For any two unit squares $s$ and $\tilde{s}$ in $\D$ define their distance $d(s, \tilde{s})$ as the length of the path in $T$ connecting them.
Consider a sequence of plugs $p_0, p_1, \ldots, p_{\frac{|p|}{2}}$ such that $p_0 = p$ and $p_{i+1}$ is obtained from $p_i$ by removing two unit squares of opposite colors $s_{i}$ and $\tilde{s_i}$ at minimal distance.
Let $\gamma_{i}=(s_i , s_{i_1}, s_{i_2}, \ldots, s_{i_{d(s_i, \tilde{s_i})-1}}, \tilde{s_i})$ be the path in $T$ joining $s_i$ and $\tilde{s_i}$.
Therefore, each term of $\gamma_i$ corresponds to an unit square in $\D$ and any two consecutive terms correspond to a domino in $\D$.
Notice that, since $s_i$ and $\tilde{s_i}$ are of opposite colors, $\gamma_i$ has an even number of vertices.

Define the two reduced floors $f_{2i}^* = \{ s_{i_{2j-1}} \cup s_{i_{2j}} \colon j=1, 2, \ldots, \frac{d(s_i, \tilde{s_i})-1}{2} \}$ and $f_{2i+1}^*= \{s_i \cup s_{i_1}\} \cup \{ s_{i_{2j}} \cup s_{i_{2j+1}} \colon j=1, 2, \ldots, \frac{d(s_i, \tilde{s_i})-3}{2} \} \cup \{ s_{i_{d(s_i, \tilde{s_i})-1}} \cup \tilde{s_i}\}$.
In other words, $f_{2i}^*$ is formed by the consecutive dominoes along the path $\gamma_i \smallsetminus (s_i \cup \tilde{s_i})$ and $f_{2i+1}^*$ is formed by the consecutive dominoes along $\gamma_i$.
Consider the floors $f_{2i}=(p_i, \, f_{2i}^*, \, p_i^{-1} \smallsetminus f_{2i}^*)$ and $f_{2i+1}=(p_i^{-1} \smallsetminus f_{2i}^*, \, f_{2i+1}^*, \, p_{2i+1})$.
The tiling $\tl_p$ is described by the sequence 
$$ \tl_p = f_0 * f_1 * \ldots * f_{|p|}.$$
The important fact is that the projection on $\D$ of every horizontal domino in $\tl_p$ is an edge of $T$.
The Figure~\ref{fig:algorithmtiling} shows an example of the construction of $\tl_p$ for the disk $\D=[0,4]^2$.
\begin{figure}[ht] 
\centerline{
\includegraphics[width=0.9\textwidth]{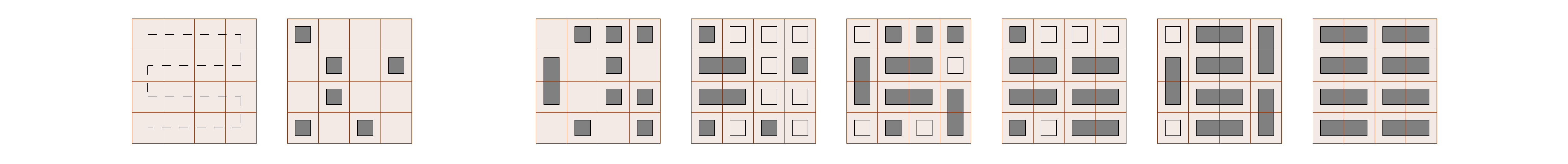}}
\caption{The disk $\D= [0,4]^2$ with a spanning tree, a plug $p$ and $\tl_p$.}
\label{fig:algorithmtiling}
\end{figure}

\begin{proof}[Proof of Theorem~\ref{thm:irregdisk-squrdisc}]
We first consider the case in which $\D \smallsetminus s$ has exactly three connected components $\D_1$, $\D_2$ and $\D_3$.
Suppose $|\D_1| \leq |\D_2| \leq |\D_3|$ so that, by hypothesis, $|\D_1 \cup \D_2| \geq 3$.
For $i \in \{1,2,3\}$ let $s_i \subset \D_i$ be a unit square adjacent to~$s$.
Moreover, let $d_1= s \cup s_1$, $d_2= s \cup s_2$ and $d_3=s \cup s_3$ be three dominoes.

We define two classes of floors $\mathcal{F}_0$ and $\mathcal{F}_1$.
A floor $(p_0,f^*,p_1)$ belongs to $\mathcal{F}_0$ if and only if:
\begin{enumerate}
    \item $f^*$ contains the domino $d_1$.
    \item $p_0$ marks all white unit squares in $\D_1 \smallsetminus s_1$ and all black unit squares in $\D_2$.
    \item $p_1$ marks all black unit squares in $\D_1 \smallsetminus s_1$ and all white unit squares in $\D_2$.
\end{enumerate}
A floor belongs to $\mathcal{F}_1$ if and only if its inverse belongs to $\mathcal{F}_0$; Figure~\ref{fig:thmdisksqrdisconnected} shows an example of a disk and its classes $\mathcal{F}_0$ and $\mathcal{F}_1$. 
This defines four classes of floors with parity: $\mathbf{f_0} = (\mathcal{F}_0,0)$, $\mathbf{f_1}=(\mathcal{F}_1,0)$, $\mathbf{f_0}^{-1}=(\mathcal{F}_1,1)$ and $\mathbf{f_1}^{-1}=(\mathcal{F}_0,1)$.

\begin{figure}[ht]
\centerline{
\includegraphics[width=0.46\textwidth]{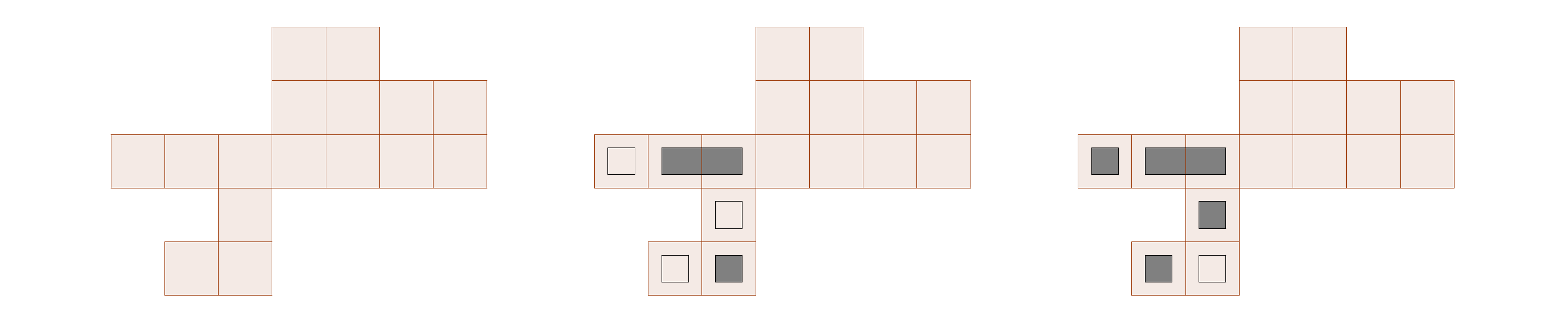}}
\caption{A disk and its two classes of floors $\mathcal{F}_0$ and $\mathcal{F}_1$.}
\label{fig:thmdisksqrdisconnected}
\end{figure}

We initially define a map $\Phi$ for oriented edges in $\mathcal{C}_{\D}^+$.
If $\mathbf{f}$ is a floor with parity not contained in the classes defined above, let $\phi(\mathbf{f}) = e$.
Otherwise, set $\Phi(\mathbf{f_0})=a$, $\Phi(\mathbf{f_1}) = b$, $\Phi(\mathbf{f_0}^{-1})=a^{-1}$ and $\Phi(\mathbf{f_1}^{-1})=b^{-1}$.

Consider two adjacent floors with parity in $\mathcal{C}_{\D}^+$ (i.e.\ a oriented path of length two) whose reduced floors contain the domino $d_1$.
We have only two possibilities.
First, both floors are neither in $\mathcal{F}_0$ nor in $\mathcal{F}_1$.
Second, either the first floor is in $\mathcal{F}_0$ and the second floor is in $\mathcal{F}_1$ or vice-versa.
Since adjacent edges have opposite parity, in any case we conclude that $\Phi$ maps this path of length two to the identity.
With this observation in mind, it is straightforward to check that $\Phi$ maps the boundary of any 2-cell to the identity.
Therefore, $\Phi$ extends to a homomorphism $\phi \colon G_{\D}^+ \to F_2$.

We now prove the surjectivity of $\phi$.
Suppose, without loss of generality, that $s$ is a white unit square.
Let $\tilde{s_3} \subset \D_3$ be a unit square adjacent to $s_3$.
Consider a floor $f=(p_0, \{d_1\} ,p_1)$ in $\mathcal{F}_0$ such that $p_1= p_0^{-1} \smallsetminus d_1$.
We may assume that $s_3 \not\subset p_0$ and $\tilde{s_3} \subset p_0$.
Notice that the two unit squares of opposite colors in $p_1$ at minimal distance are contained in $\D_2 \cup \D_3$.
Then, by construction, the floors of $\tl_{p_1}$ are contained neither in $\mathcal{F}_0$ nor in $\mathcal{F}_1$.
Let $p= (p_1 \smallsetminus s_3) \cup s_1$ be a plug and $g= (p, \{d_3\}, p_0)$ be a floor.
Analogously, the floors of $\tl_p$ are contained neither in $\mathcal{F}_0$ nor in $\mathcal{F}_1$.
Therefore, $\tl = \tl_p^{-1}*g*f*\tl_{p_1}$ is a tiling of $\D \times [0, 2|p_1| +2]$ such that $\phi(\tl)=a$.
Similarly, consider the plug $\tilde{p} = (p_0 \smallsetminus s_2) \cup s_1$ and the floor $\tilde{g}=(\tilde{p},\{d_2\},p_1)$.
Then, the tiling $\tilde{\tl} = \tl_{\tilde{p}}^{-1} * \tilde{g} * f^{-1} * \tl_{p_0}$  of $\D \times [0,2|p_0| +2]$ is such that $\phi(\tilde{\tl}) = b$.

Now, consider the case in which there exists a fourth connected component $\D_4$.
By possibly relabeling the components assume that $|\D_1| \leq |\D_2| \leq |\D_3| \leq |\D_4|$.
If $|\D_3| >1$ proceed as in the previous case: define two classes containing floors $(p_0, f^*, p_1)$ such that $d_1 \subset f^*$ and the plugs $p_0$ and $p_1$ mark alternately the unit squares in $\D_1 \cup \D_2 \cup \D_3$.

Suppose that $|\D_1|=|\D_2|=|\D_3|=1$.
We define two classes of floors $\mathcal{F}_0$ and $\mathcal{F}_1$.
A floor $(p_0, f^*, p_1)$ belongs to $\mathcal{F}_0$ if and only if $d_1 \subseteq f^*$, $s_2 \subset p_0$ and $s_3 \subset p_1$.
A floor belongs to $\mathcal{F}_1$ if and only if its inverse belongs to $\mathcal{F}_0$; see Figure~\ref{fig:thmdisksqrdisconnected2}.
As in the previous case, we have four classes with parity and a homomorphism $\phi \colon G_{\D}^+ \to F_2$.

\begin{figure}[H]
\centerline{
\includegraphics[width=0.58\textwidth]{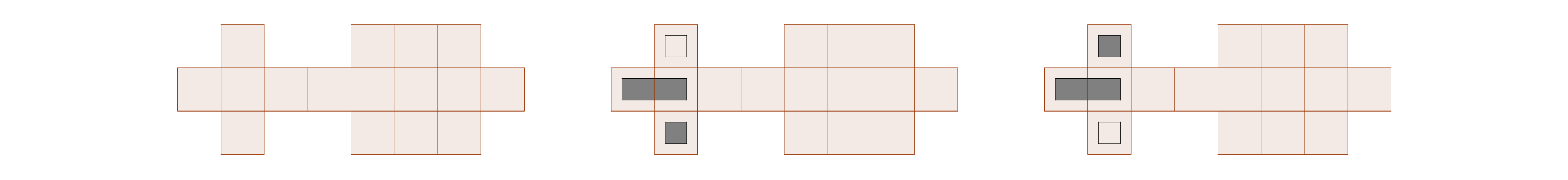}}
\caption{A disk and its two classes of floors $\mathcal{F}_0$ and $\mathcal{F}_1$.}
\label{fig:thmdisksqrdisconnected2}
\end{figure}

Let $f=(p_0,\{d_1\},p_1)$ be a floor in $\mathcal{F}_0$ such that $p_1= p_0^{-1} \smallsetminus d_1$.
Let $p = (p_1 \smallsetminus s_3) \cup s_1$ be a plug and $g=(p, \{d_3\}, p_0)$ be a floor.
Then, by construction, the floors of $\tl_{p_1}$ and $\tl_{p}$ are contained neither in $\mathcal{F}_0$ nor in $\mathcal{F}_1$.
Thus, for $\tl = \tl_p^{-1} * g * f * \tl_{p_1}$ we have $\phi(\tl) = a$.
Analogously, we obtain a tiling $\tilde{\tl}$ such that $\phi(\tilde{\tl})=b$.
\end{proof}

\begin{proof}[Proof of Theorem~\ref{thm:irregdisk-dominodisc}]
The proof consists of two cases.
We define, in both cases, two classes of floors $\mathcal{F}_0$ and $\mathcal{F}_1$.
The class $\mathcal{F}_1$ contains a floor if and only if its inverse belongs to $\mathcal{F}_0$, so that it suffices to define $\mathcal{F}_0$.

First consider the case in which there exists only one $2 \times 2$ square containing $d$; denote this square by $s$.
Let $\D_0$ be the connected component of $\D \smallsetminus d$ that intersects~$s$.
In this case, a floor $(p_0, f^*, p_1)$ belongs to $\mathcal{F}_0$ if and only if $d \subseteq f^*$ and $p_0$ (resp. $p_1$) marks all black (resp. white) unit squares in $\D_0$; as in Figure~\ref{fig:1.1}.
\begin{figure}[H]
\centerline{
\includegraphics[width=0.6\textwidth]{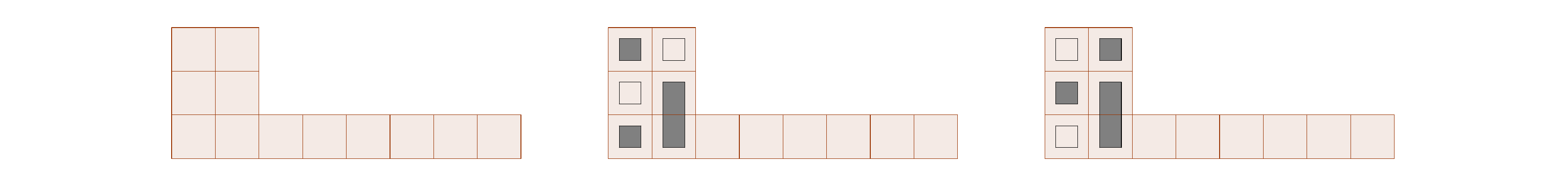}}
\caption{A disk and its two classes of floors $\mathcal{F}_0$ and $\mathcal{F}_1$.}
\label{fig:1.1}
\end{figure}

The second case is based on the existence of two distinct $2 \times 2$ squares $s_1, s_2 \subset \D$ such that $d \subset s_1$ and $d \subset s_2$.
Let $\D_1$ (resp. $\D_2$) be the connected component of $\D \smallsetminus s$ that intersects $s_1$ (resp. $s_2$).
Suppose that $|\D_1| \leq |\D_2|$.
In this case, a floor $(p_0, f^*, p_1)$ belongs to $\mathcal{F}_0$ if and only if:
\begin{enumerate}
    \item $f^*$ contains the domino $d$.
    \item $p_0$ marks all black squares in $\D_1$ and all white squares in $\D_2$.
    \item $p_1$ marks all white squares in $\D_1$ and all black squares in $\D_2$.
\end{enumerate}

\begin{figure}[H]
\centerline{
\includegraphics[width=0.45\textwidth]{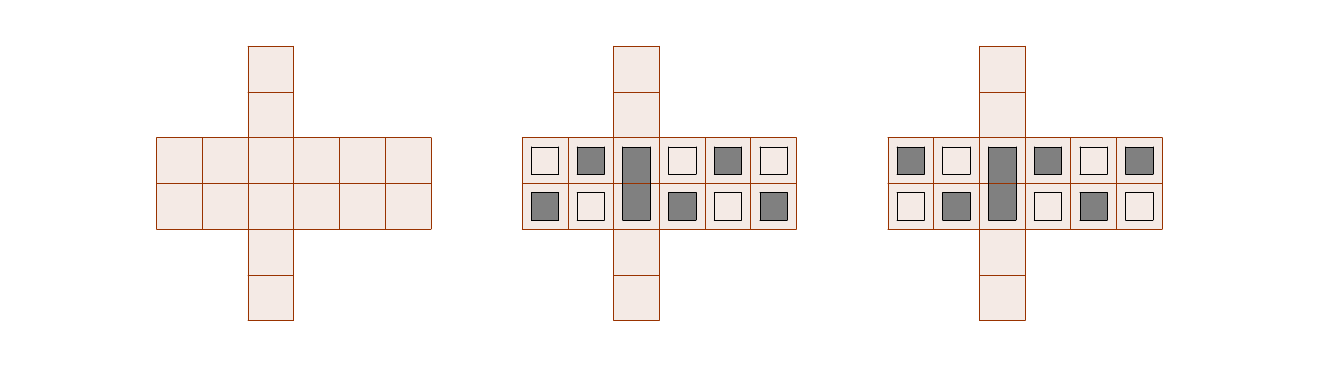}}
\caption{A disk and its two classes of floors $\mathcal{F}_0$ and $\mathcal{F}_1$.}
\label{fig:thmdominodisc2}
\end{figure}

Notice that the classes of floors are nonempty.
Indeed, by hypothesis, in the first case $|\D \smallsetminus d| - |\D_0| \geq |\D_0|$ and in the second case $|\D \smallsetminus d|-|\D_1|-|\D_2| \geq |\D_2|-|\D_1|$.
Since $\D$ is balanced, in both cases there exist plugs satisfying the required properties.

As in the proof of Theorem~\ref{thm:irregdisk-squrdisc}, we have four floors with parity which define $\Phi$ for oriented edges in $\mathcal{C}_{\D}^+$.
By construction, in a floor $(p_0, f^*, p_1)$ of class either $\mathcal{F}_0$ or $\mathcal{F}_1$, each connected component of $\D \smallsetminus d$ that intersects a $2 \times 2$ square which contains $d$ is marked alternately by $p_0$ and $p_1$.
Then, again as in the proof of Theorem~\ref{thm:irregdisk-squrdisc}, it is not difficult to check that $\Phi$ takes the boundary of any 2-cell to the identity.
Thus, $\Phi$ extends to a homomorphism $\phi \colon G_{\D}^+ \to F_2$.

We now prove that $\phi$ is surjective.
Consider a floor $f=(p_0,\{d_1\},p_1)$ in $\mathcal{F}_0$ such that $p_1= p_0^{-1} \smallsetminus d_1$.
Notice that, since $d$ is contained in a $2 \times 2$ square, there exists a spanning tree of $\D$ whose set of edges does not contain $d$.
Then, by definition, the tilings $\tl = \tl_{p_0}^{-1}*f*(p_1, \emptyset, p_1^{-1})* \tl_{p_1^{-1}}$ and $\tilde{\tl} = \tl_{p_1}^{-1}*f^{-1}*(p_0, \emptyset, p_0^{-1})* \tl_{p_0^{-1}}$ are such that $\phi(\tl)=a$ and $\phi(\tilde{\tl})=b$.
\end{proof}

\begin{proof}[Proof of Theorem~\ref{thm:dominodisc2}]
We define four classes of floors: $\mathcal{F}_0$, $\mathcal{F}_1$, $\mathcal{F}_2$, and $\mathcal{F}_3$.
Let $\mathcal{F}_0$ be the class of all floors $(p_0,f^*,p_1)$ such that $d_1 \subseteq f^*$ and $p_0$ (resp. $p_1$) mark all black (resp. white) unit squares in $\D_1$.
A floor belongs to the class $\mathcal{F}_1$ if and only if its inverse belongs to $\mathcal{F}_0$, i.e., $\mathcal{F}_1=\mathcal{F}_0^{-1}$.
Let $\mathcal{F}_2$ be the class of all floors $(p_0,f^*,p_1)$ such that $d_2 \subseteq f^*$ and $p_0$ (resp. $p_1$) mark all black (resp. white) unit squares in $\D_2$.
Finally, let $\mathcal{F}_3 = \mathcal{F}_2^{-1}$.

The Figure~\ref{fig:thm2item1} below shows an example of a disk, with $d_1$ and $d_2$ not parallel, and its four classes of floors.

\begin{figure}[H]
\centerline{
\includegraphics[width=0.5\textwidth]{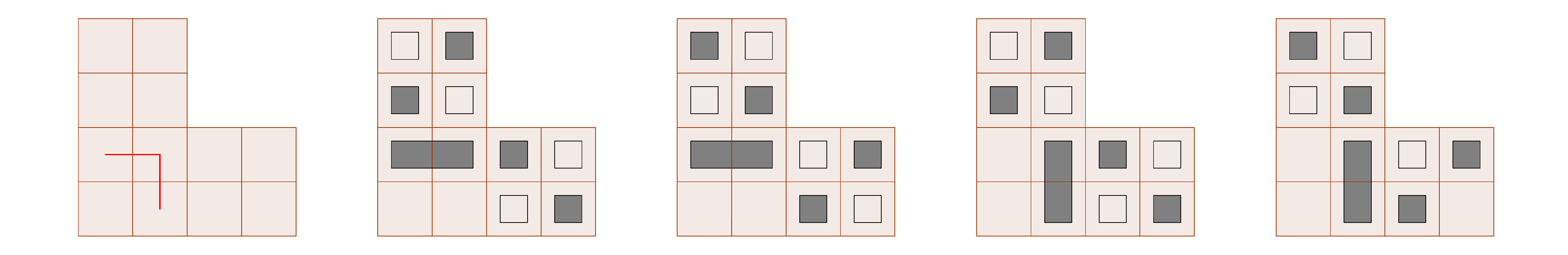}}
\caption{A disk $\D$, with $d_1$ and $d_2$ marked by a red line segment, and its 4 classes.}
\label{fig:thm2item1}
\end{figure}

If $d_1$ and $d_2$ are parallel then the intersections $\mathcal{F}_0 \cap \mathcal{F}_2$ and $\mathcal{F}_1 \cap \mathcal{F}_3$ are not empty, as shown in Figure~\ref{fig:thm222'}.

\begin{figure}[H]
\centerline{
\includegraphics[width=0.55\textwidth]{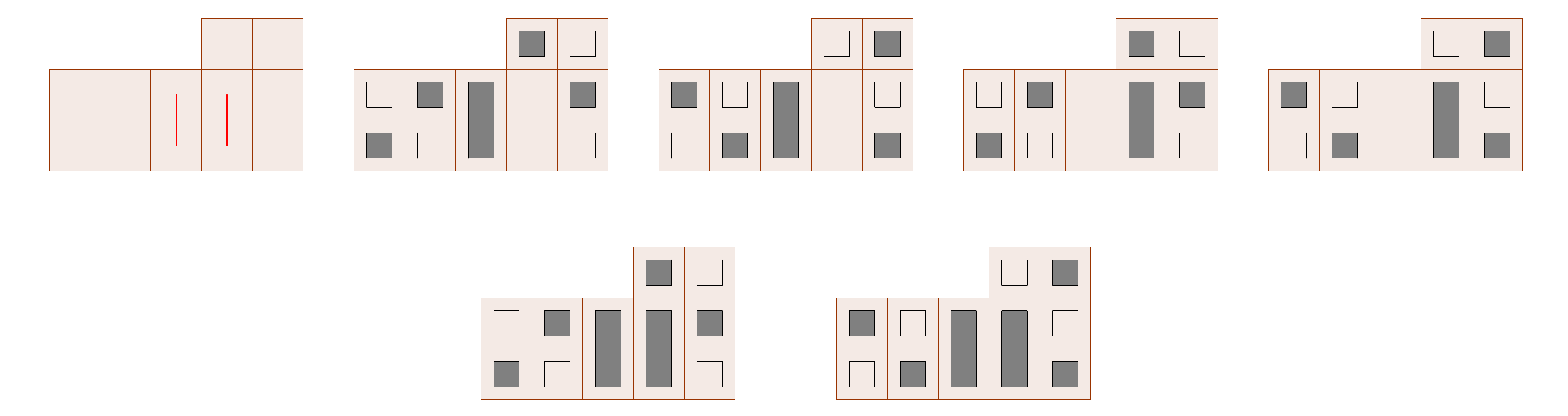}}
\caption{The first row shows a disk $\D$, with $d_1$ and $d_2$ marked by a red line segment, and its 4 classes.
The second row shows floors which are contained in the intersection of two classes.}
\label{fig:thm222'}
\end{figure}

We first define a map $\Phi$ for oriented edges in $\mathcal{C}_{\D}^+$.
Let $\mathbf{f}=(f,k)$ be a floor with parity. 
If $f$ belongs to either two or none of the classes constructed above, set $\Phi(\mathbf{f})=e$.
The image of the floors which are not taken to the identity depends on whether $d_1$ and $d_2$ are parallel.
Suppose that $f$ belongs to the class $\mathcal{F}_j$ for some $j=0, 1, 2, 3$.
If $d_1$ and $d_2$ are not parallel define $\Phi(\mathbf{f})$ as 
\begin{center}
$\begin{array}{c |c c c c c c c c c}
 (j,k) & (0,0) & (1,0) & (2,0) & (3,0) & (0,1) & (1,1) & (2,1) & (3,1) \\ 
\hline 
\Phi(\mathbf{f}) & a & b & b & a & b^{-1} & a^{-1} & a^{-1} & b^{-1} \\ 
\end{array}$
\end{center}
Otherwise, if $d_1$ and $d_2$ are parallel, define $\Phi(\mathbf{f})$  as 
\begin{center}
$\begin{array}{c |c c c c c c c c c}
 (j,k) & (0,0) & (1,0) & (2,0) & (3,0) & (0,1) & (1,1) & (2,1) & (3,1) \\ 
\hline 
\Phi(\mathbf{f}) & a & b & a^{-1} & b^{-1} & b^{-1} & a^{-1} & b & a \\ 
\end{array}$
\end{center}
In both cases, a careful analysis shows that $\Phi$ takes the boundary of any 2-cell to the identity.
Therefore, $\Phi$ extends to a homomorphism $\phi \colon G_{\D}^+ \to F_2$.
The surjectivity of $\phi$ follows as in the proof of Theorem~\ref{thm:irregdisk-dominodisc}.
\end{proof}

\begin{remark}
In many cases the homomorphisms constructed above are not isomorphisms.
The theorems are based on the existence of either a unit square $s$ or a domino $d$ that disconnects $\D$.
If one of the connected components of either $\D \smallsetminus s$ or $\D \smallsetminus d$ contains a $3 \times 2$ rectangle then there exists a tiling $\tl$ of $\D \times [0,4]$ such that $\phi(\tl)=e$ and $\textsc{Tw}(\tl)=1$; the kernel of $\phi$ then contains a nontrivial element.
The tiling $\tl$ is formed by taking a tiling of the $3 \times 2 \times 4$ box, as in Figure~\ref{fig:a}, and vertical dominoes outside the box. \hfill $\diamond$
\end{remark}
\begin{figure}[H]
\centerline{
\includegraphics[width=0.4\textwidth]{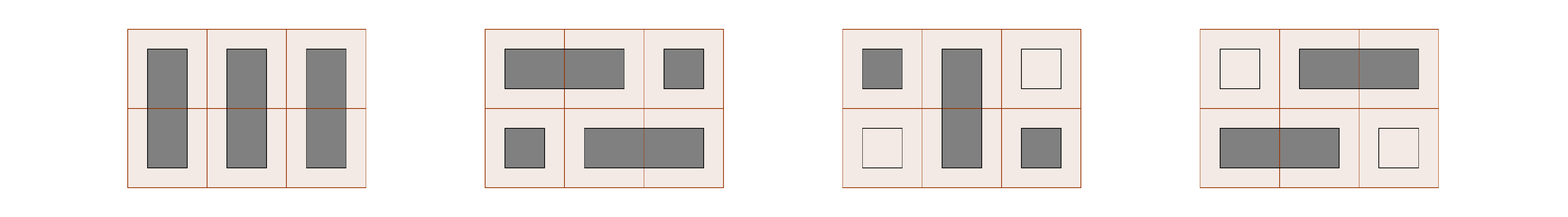}}
\caption{Tiling of a $3 \times 2 \times 4$ box.}
\label{fig:a}
\end{figure}



\noindent
\footnotesize
Raphael de Marreiros \\
Departamento de Matemática, Pontifícia Universidade Católica do Rio de Janeiro \\
Rua Marquês de São Vicente, 225, Gávea, Rio de Janeiro, RJ 22451-900, Brazil \\
\url{raphaeldemarreiros@mat.puc-rio.br}

\end{document}